\definecolor{gray}{rgb}{0.5,0.5,0.5}
\def\H{\widetilde H}
\def\N{\mathbb N}
\def\P{\mathbb P}
\def\R{\mathbb R}
\def\AA{\mathcal A}
\def\KK{\mathcal K}
\def\MM{\mathcal M}
\def\MM{\mathcal M}
\def\OO{\mathcal O}
\def\PP{\mathcal P}
\def\SS{\mathcal S}
\def\TT{\mathcal T}
\def\uu{\mathfrak u}
\def\vv{\mathfrak v}
\def\ff{\mathfrak f}
\def\gg{\mathfrak g}
\def\Csat{C_{\rm sat}}
\def\diam{{\rm diam}}
\def\length{{\rm length}}
\def\norm#1#2{\|#1\|_{#2}}
\def\set#1#2{\big\{#1\,:\,#2\big\}}
\def\enorm#1{|\!|\!|#1|\!|\!|}
\def\dual#1#2#3{\langle#1\,,\,#2\rangle_{#3}}
\def\qed{\hfill\mbox{$\blacksquare$}}
\def\eps{\varepsilon}
\def\next{\noindent\mbox{\color{gray}\Large$\bullet$}\hspace*{1ex}}
\newtheorem{proposition}{Proposition}
\newtheorem{theorem}[proposition]{Theorem}
\newtheorem{lemma}[proposition]{Lemma}
\newtheorem{remark}[proposition]{Remark}
\newcounter{const}
\def\c#1{C_{\ref{const:#1}}}
\def\newc#1{\refstepcounter{const}\label{const:#1}C_{\theconst}}
\def\revision#1{#1}
\def\mf#1{#1}
\def\subsection#1{\medskip\noindent\refstepcounter{subsection}{\em\bfseries\thesubsection.\ #1}\\[-2mm]}
\begin{document}

\begin{frontmatter}
\title{ZZ-Type A~Posteriori Error Estimators\\for Adaptive Boundary Element Methods on a Curve}
\date{\today}

\author{Michael Feischl}
\ead{Michael.Feischl@tuwien.ac.at}

\author{Thomas F\"uhrer}
\ead{Thomas.Fuehrer@tuwien.ac.at}

\author{Michael Karkulik}
\ead{mkarkulik@mat.puc.cl}

\author{Dirk Praetorius\corref{cor1}}
\ead{Dirk.Praetorius@tuwien.ac.at}
\ead[url]{http://www.asc.tuwien.ac.at/abem}

\cortext[cor1]{Corresponding Author}

\begin{abstract}
In the context of the adaptive finite element method (FEM), ZZ-error estimators
named after Zienkiewicz and Zhu~\cite{zz} are mathematically well-established
and widely used in practice. In this work, we propose and analyze ZZ-type
error estimators for the adaptive boundary element method (BEM). We consider
weakly-singular and hyper-singular integral equations and prove, in particular,
convergence of the related adaptive mesh-refining algorithms. 
Throughout, the theoretical findings are underlined by numerical experiments.
\end{abstract}

\begin{keyword}
  boundary element method \sep local mesh-refinement \sep adaptive algorithm
 \sep ZZ-type error estimator
  \MSC[2010] 65N30 \sep 65N38 \sep 65N50
\end{keyword}

\end{frontmatter}

\section{Introduction}

\noindent
Since the seminal work of Zienkiewicz and Zhu~\cite{zz}, averaging techniques became 
popular in engineering and applied sciences for the a~posteriori error control
of the finite element solution of partial differential equations. To sketch the 
idea, we consider  the most simple context of the 2D Poisson equation
\begin{align}
\begin{split}
 -\Delta \uu &= \ff \hspace*{4.3mm}\text{in }\Omega,\\
 \uu &= 0\quad\text{on }\partial\Omega.
\end{split}
\end{align}
Here and throughout the work, $\Omega\subset\R^2$ is a bounded Lip\-schitz domain
with polygonal boundary $\partial\Omega$.

Let $\TT_h$ denote a regular triangulation of $\Omega$ into compact,
nondegenerate triangles. Let $\PP^0(\TT_h)$ be the space of all $\TT_h$-piecewise
constant functions and $\SS^1(\TT_h)$ be the space of all $\TT_h$-piecewise affine
and globally continuous splines. The lowest-order finite element solution
$\uu_h\in\SS^1_0(\TT_h) := \set{\vv_h\in\SS^1(\TT_h)}{\vv_h=0\text{ on }\partial\Omega}$ is the unique solution of the Galerkin formulation
\begin{align}
 \int_\Omega \nabla \uu_h\cdot\nabla \vv_h\,dx
 = \int_\Omega \ff \vv_h\,dx
\end{align}
for all test functions $\vv_h\in\SS^1_0(\TT_h)$. In this context, the ZZ error 
estimator reads
\begin{align}
 \eta_h = \norm{(1-\AA_h)\nabla \uu_h}{L^2(\Omega)},
\end{align}
where $\AA_h:\PP^0(\TT_h)^2\to\SS^1(\TT_h)^2$ is some averaging operator which 
maps the $\TT_h$-piecewise constant gradient $\nabla \uu_h\in\PP^0(\TT_h)^2$ onto 
some continuous and piecewise affine function 
$\AA_h\nabla \uu_h\in\SS^1(\TT_h)^2$. Possible choices for $\AA_h$ are the 
usual Cl\'ement-type operators like
\begin{align}
 (\AA_h \vv)(z) = \frac{1}{{\rm area}(\omega_z)}\,\int_{\omega_z}\vv\,dx
\end{align}
for all nodes $z\in\KK_h$ of $\TT_h$, where 
\begin{align}
 \omega_z:=\bigcup\set{T\in\TT_h}{z\in T}
\end{align}
denotes the patch of $z$, i.e., the union of all elements $T\in\TT_h$ which have
$z$ as a node.
Although ZZ error estimators are strikingly simple and mathematically well-developed
for the finite element method, see e.g.~\cite{bc1,bc2,cc04,r94}, they have not been considered
for boundary element methods, yet. 
\revision{Available error estimators from the 
literature include residual-based error estimators for weakly-singular
\cite{cs95,cceps96,cc97,cf,cms,faermann2d,faermann3d} and hyper-singular integr\mf{a}l equations~\cite{cc97,cmps}, hierarchical error estimators for weakly-singular
\cite{eh06,hms,msw} and hyper-singular integral 
equations \cite{h02,hms}, $(h-h/2)$-based error 
estimators \cite{effp,efgp,fp}, averaging on large 
patches~\cite{cp:symm,cp:hypsing,cp:steinbach},
and estimators based on the use of the full Calder\'on system~\cite{paulino,ss00,steinbach00}.
 The reader is also
referred to the overviews given in~\cite{cf,efgp} and the references therein.}

\revision{This note proposes} ZZ-type error estimators in the context of the 
boundary element method. As model problems serve the hyper-singular and the weakly-singular integral equation associated with the 2D 
Laplacian. Difficulties arise from the fact that neither the involved integral 
operators nor the energy norms are local. 
%

The outline of this paper reads as follows: In Section~\ref{section:hypsing},
we consider the hyper-singular integral equation, introduce a ZZ-type error
estimator, and provide numerical evidence for its successful use on a slit model
problem as well as for the first-kind integral formulation of some Neumann 
problem. In Section~\ref{section:symm}, \revision{we apply this approach} in the context of the
weakly-singular integral equation. 
While Section~\ref{section:hypsing} and Section~\ref{section:symm} are 
written  for a general audience, Section~\ref{section:preliminaries} collects
the preliminaries for the numerical analysis of the proposed a~posteriori error estimators. 
A rigorous a~posteriori error analysis
is postponed to Section~\ref{section:aposteriori}.
The final Section~\ref{section:convergence} even proves convergence of the 
standard adaptive mesh-refining algorithm steered by the ZZ-type error 
estimators proposed.

\section{Hyper-singular integral equation}
\label{section:hypsing}

\noindent
We \revision{suppose that $\Omega\subset\R^2$ is} simply connected, i.e., $\Omega$ has no
holes and $\partial\Omega$ thus is connected.
We denote the fundamental solution of the 2D Laplacian by
\begin{align}\label{eq:kernel}
 G(z) := -\frac{1}{2\pi}\,\log|z|
 \quad\text{for }z\in\R^2\backslash\{0\}.
\end{align}
Let $\Gamma$ be some relatively open and connected subset of the boundary 
$\partial\Omega$. Then, the hyper-singular integral operator is formally 
defined by
\begin{align}
 (Wu)(x) = -\partial_{n(x)}\int_\Gamma\partial_{n(y)}G(x-y)u(y)\,d\Gamma(y)
\end{align}
for $x\in\Gamma$.
Here, $\int_\Gamma\,d\Gamma$ denotes integration along the curve and 
$\partial_{n(x)}$ is the normal derivative at some point $x\in\Gamma$. 
The hyper-singular integral equation reads
\begin{align}\label{eq:hypsing}
 Wu = f
 \quad\text{on }\Gamma.
\end{align}
For the following facts on the functional analytic setting as well as for
proofs and further details, the reader is referred to e.g.\ the 
monographs~\cite{hw,mclean,ss}.

\subsection{Slit model problem}

\noindent
Assume that $\Gamma\subsetneqq\partial\Omega$ is not closed. 
\revision{Let $\H^{1/2}(\Gamma)$ denote the space of all 
$H^{1/2}(\Gamma)$-functions which vanish at the tips of $\Gamma$.}
Then, 
$W$ is a linear, bounded and elliptic operator between the fractional-order 
Sobolev space $\H^{1/2}(\Gamma)$ and its dual space $H^{-1/2}(\Gamma)$, where 
duality is understood with respect to the extended $L^2(\Gamma)$-scalar
product $\dual\cdot\cdot{L^2(\Gamma)}$. Let $f\in H^{-1/2}(\Gamma)$. 
The variational form \revision{of~\eqref{eq:hypsing} reads}
\begin{align}\label{eq:hypsing:slit}
 \dual{Wu}{v}{L^2(\Gamma)} = \dual{f}{v}{L^2(\Gamma)}
 \quad\text{for all }v\in\H^{1/2}(\Gamma).
\end{align}
Since the left-hand side defines a scalar product on $\H^{1/2}(\Gamma)$, 
the Lax-Milgram lemma provides existence and uniqueness of the solution $u$.

\subsection{Model problem on closed boundaries}

\noindent
Assume that $\Gamma=\partial\Omega$ is closed. Then, $W$ is a linear and bounded
operator from $H^{1/2}(\partial\Omega)$ to $H^{-1/2}_\star(\partial\Omega)
:=\set{\psi\in H^{-1/2}(\partial\Omega)}{\dual{\psi}{1}{L^2(\revision{\partial\Omega})}=0}$. Moreover, $W$ is 
elliptic on the subspace $H^{1/2}(\partial\Omega)/\R\equiv H^{1/2}_\star(\partial\Omega) 
:= \set{v\in H^{1/2}(\partial\Omega)}{\int_{\partial\Omega} v\,d\Gamma=0}$, where connectedness
of $\partial\Omega$ is required. Let $f\in H^{-1/2}_\star(\partial\Omega)$. 
The variational form \revision{of~\eqref{eq:hypsing} now reads}
\begin{align}\label{eq:hyping:closed}
 \dual{Wu}{v}{L^2(\partial\Omega)} = \dual{f}{v}{L^2(\partial\Omega)}
 \text{ for all }v\in H^{1/2}_\star(\partial\Omega).
\end{align}
As before, the left-hand side defines a scalar product on $H^{1/2}_\star(\partial\Omega)$, 
and the Lax-Milgram lemma thus provides existence and uniqueness of the solution $u$.

We note that, for certain right-hand sides $f$ and $\Gamma=\partial\Omega$,
\eqref{eq:hypsing} is an equivalent formulation of the Neumann problem
\begin{align}\label{eq:hypsing:pde}
\begin{split}
 -\Delta \uu &= \ff \quad\text{in }\Omega,\\
 \partial_n \uu &= \gg\quad\text{on }\partial\Omega.
\end{split}
\end{align}
In this case, the solution $u$ of~\eqref{eq:hypsing} is, up to some additive
constant, the trace $\uu|_{\partial\Omega}$ of the solution $\uu$ 
of~\eqref{eq:hypsing:pde}.

\subsection{Galerkin boundary element discretization}

\noindent
Let $\TT_h$ be a partition of $\Gamma$ into affine line segments.
Let $\SS^1(\TT_h)$ denote the space of all functions $v_h$ which are 
continuous and $\TT_h$-piecewise affine with respect to the arc\-length.
For $\Gamma\subsetneqq\partial\Omega$, $\SS^1_0(\TT_h)
:=\SS^1(\TT_h)\cap\H^{1/2}(\Gamma)$ 
denotes the subspace of all functions $v_h\in\SS^1(\TT_h)$ 
which additionally vanish at the tips of $\Gamma$. 
For $\Gamma = \partial\Omega$, $\SS^1_0(\TT_h)
:=\SS^1(\TT_h)\cap H^{1/2}_\star(\Gamma)$ 
denotes the subspace of all functions $v_h\in\SS^1(\TT_h)$ which satisfy 
$\int_\Gamma v_h\,d\Gamma=0$. In either case, $\SS^1_0(\TT_h)$ 
is a conforming subspace of $\H^{1/2}(\Gamma)$ resp.\ $H^{1/2}_\star(\partial\Omega)$. In particular, the Galerkin formulation
\revision{of~\eqref{eq:hypsing:slit} resp.~\eqref{eq:hyping:closed} reads}
\begin{align}
 \dual{Wu_h}{v_h}{L^2(\Gamma)} = \dual{f}{v_h}{L^2(\Gamma)}
 \text{ for all }v_h\in\SS^1_0(\TT_h)
\end{align}
\revision{and} admits a unique Galerkin solution $u_h\in\SS^1_0(\TT_h)$. 

\subsection{ZZ-type error estimator}
\label{section:hypsing:zz}

\noindent
Let $h\in L^\infty(\Gamma)$ be the local mesh-size function defined by 
\begin{align}\label{eq:h}
 h|_T:=\length(T) 
 \quad\text{for } 
 T\in\TT_h
\end{align}
with the arclength $\length(\cdot)$.
With $(\cdot)'$ denoting the arc\-length 
derivative, we propose the following ZZ-type 
error estimator
\begin{align}\label{eq1:hypsing:clement}
 \eta_h = \norm{h^{1/2}(1-\AA_h)u_h'}{L^2(\Gamma)},
\end{align}
where $\AA_h:\revision{L^2(\Gamma)}\to\SS^1(\TT_h)$ denotes the Cl\'ement operator 
defined by
\begin{align}\label{eq2:hypsing:clement}
 (\AA_hv)(z) 
 := \frac{1}{\length(\omega_z)}\int_{\omega_z}v\,d\Gamma
\end{align}
for all nodes $z\in\KK_h\text{ of }\TT_h$
with $\omega_z = \bigcup\set{T\in\TT_h}{z\in T}$ the node patch.

\subsection{Adaptive mesh-refining algorithm}
\label{section:hypsing:algorithm}

\noindent
\revision{Given} a right-hand side $f\in H^{-1/2}(\Gamma)$, an initial
partition $\TT_h$ of $\Gamma$, and some adaptivity parameter $0<\theta<1$,
\revision{the proposed} adaptive algorithm reads as follows:
\begin{itemize}
\item[(i)] Compute discrete solution $u_h\in\SS^1_0(\TT_h)$.
\item[(ii)] For all $T\in\TT_h$, compute the refinement \revision{indicators}
\begin{align}
 \eta_h(T)^2 := \length(T)\,\norm{(1-\AA_h)u_h'}{L^2(T)}^2.
\end{align}
\item[(iii)] Determine a set $\MM_h\subseteq\TT_h$ such that
\begin{align}\label{eq:doerfler}
 \theta\,\eta_h^2 \le \sum_{T\in\MM_h}\eta_h(T)^2.
\end{align}
\item[(iv)] Generate a new mesh $\TT_h$ by bisection of at least all elements 
in $\MM_h$.
\item[(v)] goto (i) and iterate.
\end{itemize}
\revision{%
For the proof of quasi-optimal convergence rates in the frame of adaptive 
FEM, e.g.~\cite{s07,ckns}, and adaptive BEM~\cite{fkmp,gantumur}, 
the set $\MM_h$ in step~(iii) is usually chosen with minimal cardinality.
A greedy algorithms sorts the indicators in descending order and then iteratively splits $\TT_h$ into $\MM_h$ and $\TT_h\backslash\MM_h$ by moving
the largest indicator from $\TT_h\backslash\MM_h$ to $\MM_h$ until the 
D\"orfler criterion~\eqref{eq:doerfler} is satisfied.}

For our implementation, we use the \textsc{Matlab} BEM 
library~\texttt{HILBERT}~\cite{hilbert}. 
The local mesh-refinement in 
step~(iv) of the algorithm is done by some bisection-based algorithm 
from~\cite{affkp} which guarantees that the local mesh-ratio 
\begin{align}\label{eq:kappa}
 \kappa(\TT_h) := \max\set{\frac{\length(T)}{\length(T')}}{T,T'\in\TT_h\text{ neighbors}}
\end{align}
stays uniformly bounded 
$\kappa(\TT_h)\le\gamma$ for some $\gamma\ge2$ which depends only
on the initial partition. 
We stress that such a property is required for the numerical analysis of $\eta_h$ 
in Section~\ref{section:aposteriori} and Section~\ref{section:convergence} below.

\revision{We recall from the literature~\cite{ss} that the optimal rate of 
convergence with lowest-order BEM is $\OO(h^{3/2})$ if the exact solution 
is smooth. This corresponds to $\OO(N^{-3/2})$ with respect to the number 
$N$ of elements on adaptively generated meshes.}

\begin{figure}[t]
\begin{center}
\includegraphics[width=.45\textwidth]{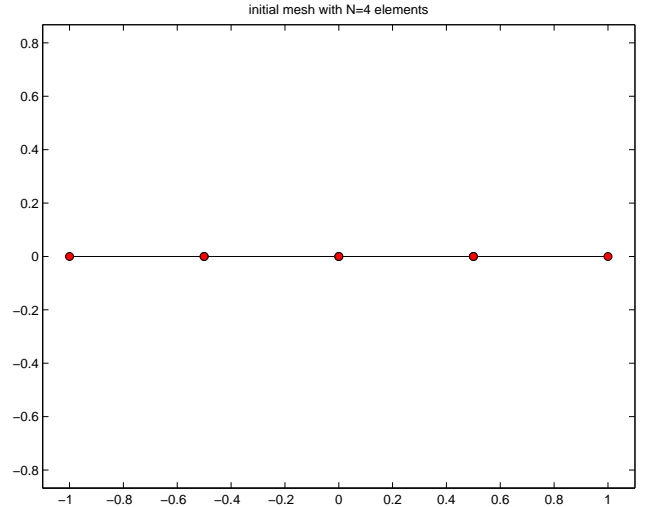}
\caption{Slit $\Gamma=(-1,1)\times\{0\}$ and initial mesh $\TT_h$ with
$N=4$ elements of the numerical experiment for the hyper-singular integral
equation from Section~\ref{example:hypsing:slit} and the weakly-singular
integral equation from Section~\ref{example:weaksing:slit}.}
\label{fig:hypsing:slit}
\end{center}
\end{figure}
\begin{figure}[t]
\begin{center}
\includegraphics[width=.45\textwidth]{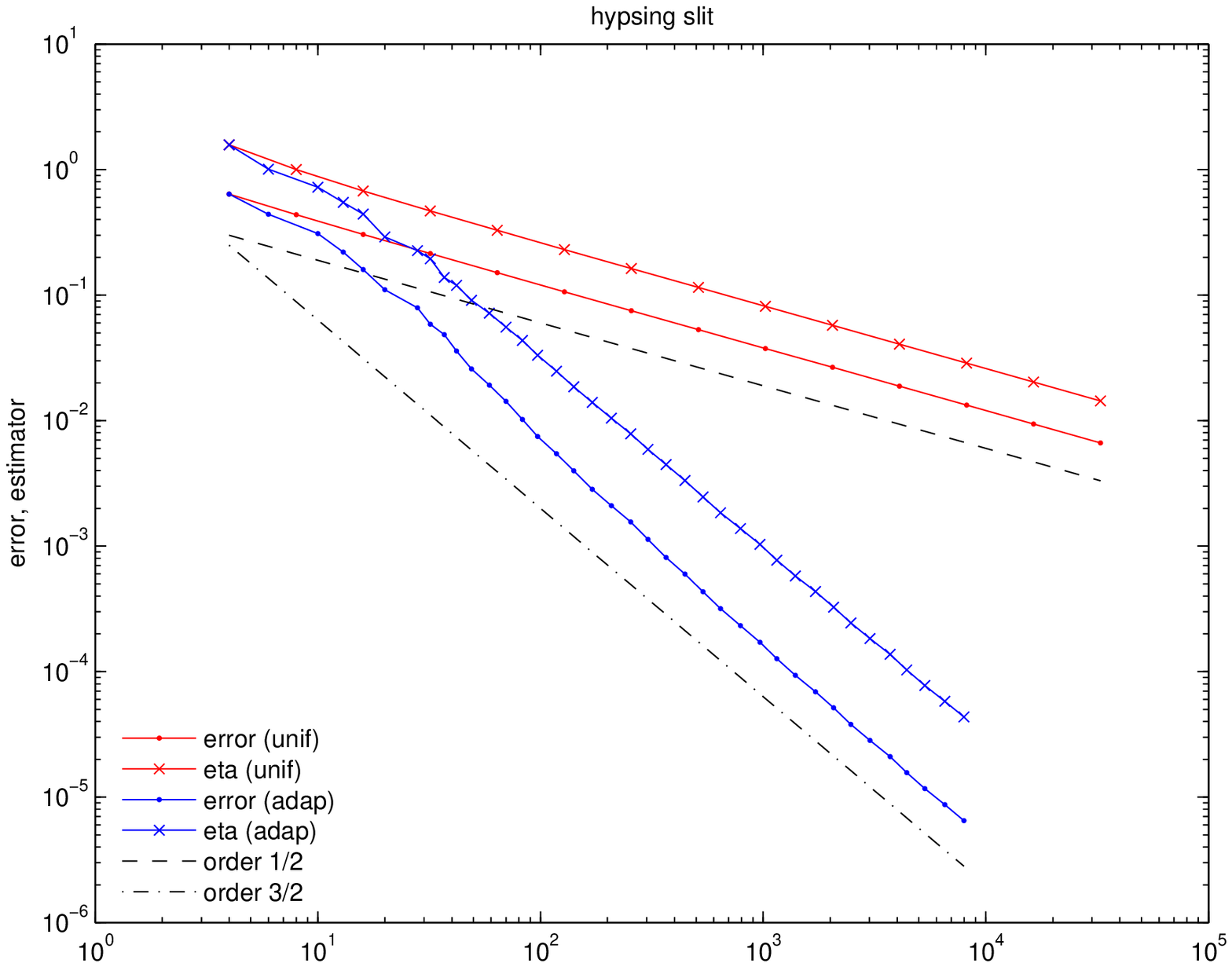}
\caption{Numerical outcome of the experiment for the hyper-singular integral
equation from Section~\ref{example:hypsing:slit}.}
\label{fig:hypsing:slit:results}
\end{center}
\end{figure}
%
\subsection{Numerical experiment for slit problem}
\label{example:hypsing:slit}

\noindent 
We consider the hyper-singular integral equation
\begin{align}
 Wu = 1
 \quad\text{on }\Gamma = (-1,1)\times\{0\}.
\end{align}
The exact solution is known and reads $u(x,0) = 2\sqrt{1-x^2}$. Note that
$u\in\H^{1/2}(\Gamma)\cap H^{1-\eps}(\Gamma)$ for all $\eps>0$. In particular,
we expect \revision{an empirical} convergence order $\OO(h^{1/2})$ for 
uniform mesh-refinement.

The initial mesh $\TT_h$ for the computation
is shown in Figure~\ref{fig:hypsing:slit}. We compare adaptive 
mesh-refinement with parameter $\theta=1/2$ with uniform mesh-refinement.
The corresponding convergence graphs are visualized in 
Figure~\ref{fig:hypsing:slit:results}. \revision{While uniform mesh-refinement
leads to the predicted suboptimal order $\OO(h^{1/2})=\OO(N^{-1/2})$, the
proposed adaptive strategy regains the optimal rate $\OO(N^{3/2})$.}

\begin{figure}[t]
\begin{center}
\includegraphics[width=.45\textwidth]{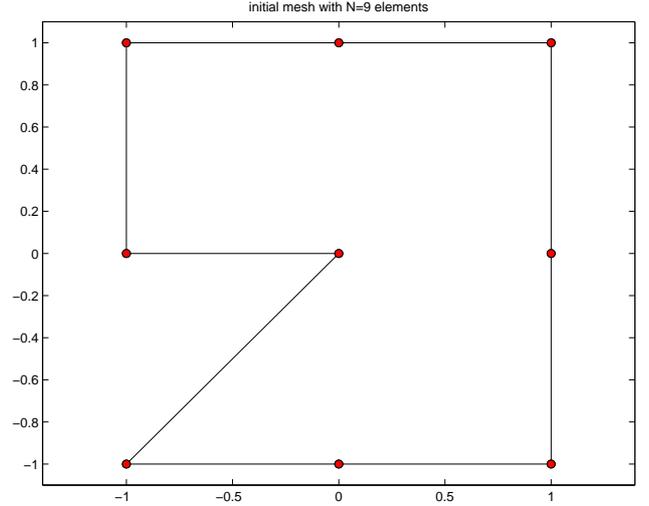}
\caption{Boundary $\Gamma=\partial\Omega$ and initial mesh $\TT_h$ with
$N=9$ elements of the numerical experiment for the hyper-singular integral
equation from Section~\ref{example:hypsing}.}
\label{fig:hypsing:zshape}
\end{center}
\end{figure}
\begin{figure}[t]
\begin{center}
\includegraphics[width=.45\textwidth]{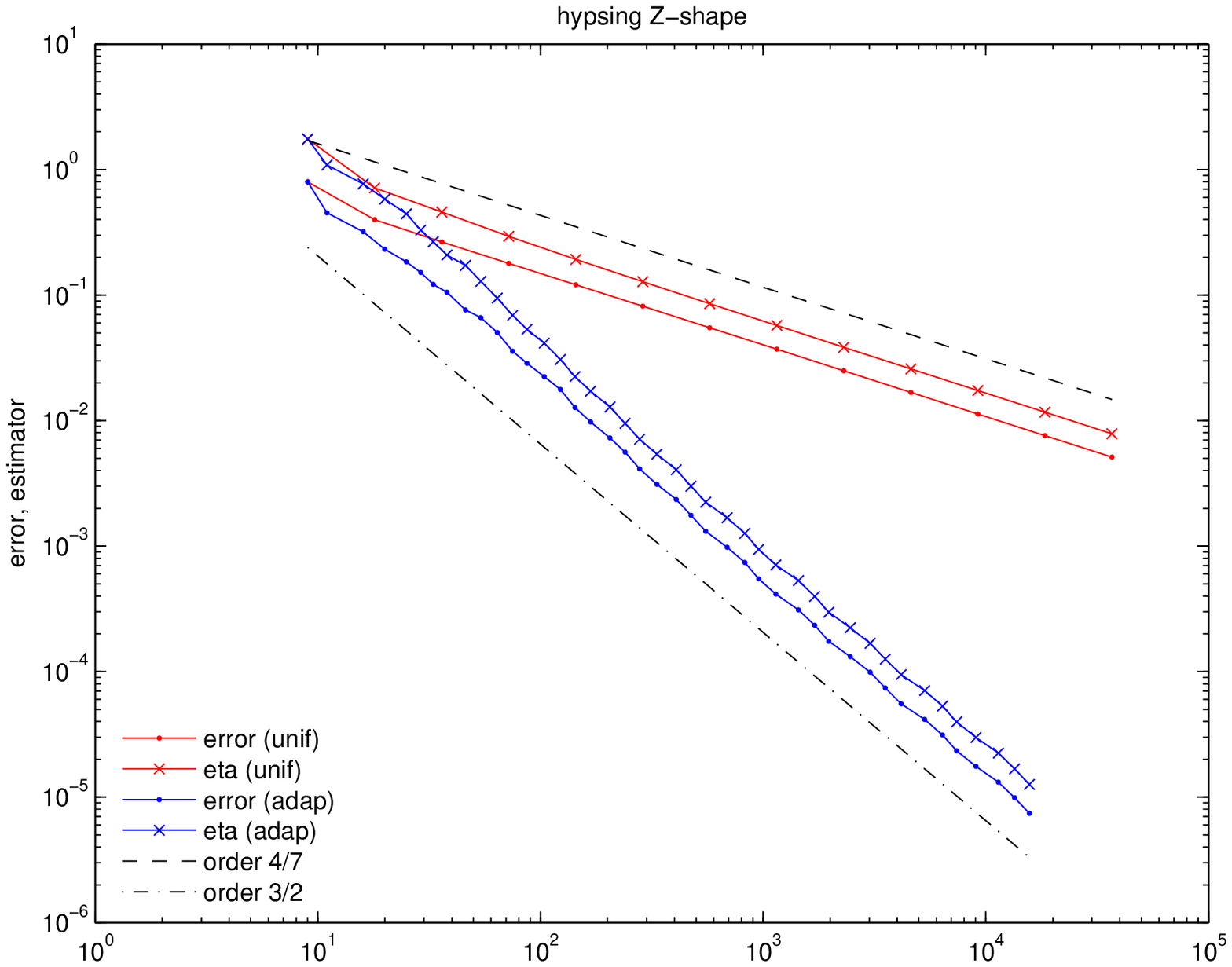}
\caption{Numerical outcome of the experiment for the hyper-singular integral
equation from Section~\ref{example:hypsing}.}
\label{fig:hypsing:zshape:results}
\end{center}
\end{figure}
%
\subsection{Numerical experiment on closed boundary}
\label{example:hypsing}

\noindent
We consider the Z-shaped domain with reentrant corner
at \revision{the origin $(0,0)$}, \revision{see 
Figure~\ref{fig:hypsing:zshape}} \mf{for a sketch}.
The right-hand side \revision{$f=(1/2-K')(\partial_n\uu)\in H^{-1/2}(\Gamma)$ 
with $\Gamma=\partial\Omega$ and $K'$ the adjoint double layer-potential}
is chosen such that the 
hyper-singular integral equation~\eqref{eq:hypsing} is equivalent to some
Neumann problem~\eqref{eq:hypsing:pde} with $\ff=0$. The exact solution reads
\begin{align}
 \uu(x) = r^{4/7}\cos(4\varphi/7)
\end{align}
in 2D polar coordinates $x = r\,(\cos\varphi,\sin\varphi)$. The exact 
solution $u$
of~\eqref{eq:hypsing} is, up to some additive constant, the trace
$\uu|_\Gamma$. Moreover, $u$ admits a generic singularity at the reentrant corner.
\revision{Note that $u\in H^{1/2}_0(\partial\Omega)\cap H^{\mf{4/7+1/2}-\eps}(\partial\Omega)$ for all $\eps>0$. Theoretically, this predicts an expected convergence
order $\OO(h^{4/7})$ for uniform mesh-refinement.}

The Z-shaped domain as well as the initial mesh $\TT_h$ for the computation
are shown in Figure~\ref{fig:hypsing:zshape}. We compare adaptive 
mesh-refinement with parameter $\theta=1/2$ with uniform mesh-refinement.
The corresponding convergence graphs are visualized in 
Figure~\ref{fig:hypsing:zshape:results}. 
\revision{While uniform mesh-refinement leads \revision{to the expected rate}
$\OO(h^{4/7})=\OO(N^{-4/7})$,
the proposed adaptive strategy regains the optimal rate $\OO(N^{-3/2})$.}

\section{Weakly-singular integral equation}
\label{section:symm}

\noindent
In this section, we consider the simple-layer potential
\begin{align}
 (V\phi)(x) = \int_\Gamma G(x-y)\phi(y)\,d\Gamma(y)
 \quad\text{for }x\in\Gamma,
\end{align}
where $G(\cdot)$ denotes the fundamental solution of the 2D Laplacian 
from~\eqref{eq:kernel}. We assume that $\Gamma\subseteq\partial\Omega$ is 
a relatively open but possibly non-connected subset of the boundary $\partial\Omega$ and that $\diam(\Omega)<1$. 
For the following facts on the functional analytic setting as well as for
proofs and further details, we again refer to e.g.\ the 
monographs~\cite{hw,mclean,ss}.

\subsection{Model problem}

\noindent
It is well-known that $V$ is a linear, bounded, and elliptic operator
from $\H^{-1/2}(\Gamma)$ to its dual $H^{1/2}(\Gamma)$, 
\revision{where ellipticity follows from $\diam(\Omega)<1$}.
Given some $f\in H^{1/2}(\Gamma)$, we aim at the numerical solution of the 
weakly-singular integral equation
\begin{align}\label{eq:weaksing}
 V\phi = f.
\end{align}
We use the variational form
\begin{align}\label{eq:weaksing:weakform}
 \dual{V\phi}{\psi}{L^2(\Gamma)} = \dual{f}{\psi}{L^2(\Gamma)}
 \quad\text{for all }\psi\in\H^{-1/2}(\Gamma).
\end{align}
The left-hand side defines an equivalent scalar product on 
$\H^{-1/2}(\Gamma)$, and the Lax-Milgram lemma thus provides existence and
uniqueness of the solution $\phi\in\H^{-1/2}(\Gamma)$ 
of~\eqref{eq:weaksing:weakform}. 

We stress that, for certain right-hand sides $f$ and 
$\Gamma=\partial\Omega$,~\eqref{eq:weaksing} is an equivalent formulation 
of the Dirichlet problem
\begin{align}\label{eq:weaksing:pde}
\begin{split}
 -\Delta \uu &= \ff \quad\text{in }\Omega,\\
 \uu &= \gg\quad\text{on }\Gamma.
\end{split}
\end{align}
In this case, it holds $\phi=\partial_n\uu$. In particular, one cannot 
expect that $\phi$ is locally smooth, where the outer normal vector $n$ 
is not.

\subsection{Galerkin boundary element discretization}

\noindent
Let $\TT_h$ be a partition of $\Gamma$ into affine line segments.
Let $\PP^0(\TT_h)$ denote the space of all $\TT_h$-piecewise constant
functions $\psi_h$.
For the Galerkin discretization, we
replace $\phi,\psi\in\H^{-1/2}(\Gamma)$ by discrete functions 
$\phi_h,\psi_h\in\PP^0(\TT_h)$. Then, $\PP^0(\TT_h)\subset\H^{-1/2}(\Gamma)$ 
is a conforming subspace, and the Galerkin formulation
\begin{align}
 \dual{V\phi_h}{\psi_h}{L^2(\Gamma)} = \dual{f}{\psi_h}{L^2(\Gamma)}
 \text{ for all }\psi_h\in\PP^0(\TT_h)
\end{align}
admits a unique Galerkin solution $\phi_h\in\PP^0(\TT_h)$.

\subsection{ZZ-type error estimator}

\noindent
With $h\in L^\infty(\Gamma)$ the local mesh-size function from~\eqref{eq:h}, 
we propose the following ZZ-type error estimator
\begin{align}
 \eta_h = \norm{h^{1/2}(1-\AA_h)\phi_h}{L^2(\Gamma)}.
\end{align}
As noted before, we may expect that $\phi$ is non-smooth at points 
$x\in\Gamma$, where the normal mapping $x\mapsto n(x)$ is non-smooth. 
Therefore, we slightly modify the Cl\'ement operator 
$\AA_h:L^2(\Gamma)\to\PP^1(\TT_h)$ 
\revision{from~\eqref{eq2:hypsing:clement}}
as follows:
\begin{itemize}
\item First, if $\{z\}=T_j\cap T_k$ is the node between the elements 
$T_j,T_k\in\TT_h$ and if the normal vector of $T_j$ and $T_k$ does not jump 
at $z$, we define
\begin{align}\label{eq1:weaksing:clement}
 (\AA_hv)(z)
 := \frac{1}{\length(\omega_z)}\int_{\omega_z}v\,d\Gamma
\end{align}
with $\omega_z = \bigcup\set{T\in\TT_h}{z\in T} = T_j\cup T_k$ the node patch. 
\item Second, if the normal vectors of $T_j$ and $T_k$ differ at 
$z$, we allow $\AA_hv$ to jump at $z$ as well, namely
\begin{align}\label{eq2:weaksing:clement}
\begin{split}
(\AA_hv)|_{T_j}(z) &= \frac{1}{\length(T_j)}\int_{T_j}v\,d\Gamma,\\
(\AA_hv)|_{T_k}(z) &= \frac{1}{\length(T_k)}\int_{T_k}v\,d\Gamma.
\end{split}
\end{align}
\end{itemize}
Note that this definition can only be meaningful if each \revision{connected}
component $\gamma\subseteq\Gamma$ on which the normal mapping $x\mapsto n(x)$
is smooth, consists \revision{of at least} two elements. Otherwise, $\gamma=T_j$ would 
lead to $\phi_h|_\gamma = (\AA_h\phi_h)|_\gamma$ so that $\eta_h$ vanishes
on $\gamma$, i.e.\ $T_j$ would never be marked for refinement by an adaptive
algorithm.

\subsection{Adaptive algorithm}
\label{section:weaksing:algorithm}

\noindent
We consider the adaptive algorithm from Section~\ref{section:hypsing:algorithm}
with the obvious modifications., i.e.\ we compute $\phi_h\in\PP^0(\TT_h)$ in
step~(i) as well as the local contributions
\begin{align}
 \eta_h(T)^2 := \length(T)\,\norm{(1-\AA_h)\phi_h}{L^2(T)}^2
\end{align}
in step~(ii). \revision{We refer to the literature, e.g.~\cite{ss}, that the optimal
rate of lowest-order BEM is $\OO(h^{3/2})$ for a smooth solution $\phi$,
and the adaptive algorithm thus aims to regain a convergence order
$\OO(N^{-3/2})$ with respect to the number of elements.}

%
\begin{figure}[t]
\begin{center}
\includegraphics[width=.45\textwidth]{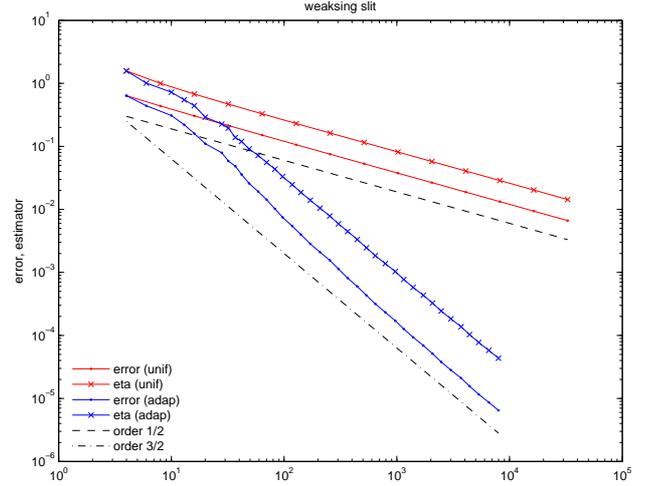}
\caption{Numerical outcome of experiment from Section~\ref{example:weaksing:slit}.}
\label{fig:weaksing:slit:results}
\end{center}
\end{figure}
%
\subsection{Numerical experiment for slit problem}
\label{example:weaksing:slit}

\noindent 
We consider the weakly-singular integral equation
\begin{align}
 V\phi = 1
 \quad\text{on }\Gamma = (-1,1)\times\{0\}.
\end{align}
The unique exact solution \revision{of this equation} is known and reads $\phi(x,0) = -2x/\sqrt{1-x^2}$. Note that
$\phi\in\H^{-1/2}(\Gamma)\cap H^{-\eps}(\Gamma)$ for all $\eps>0$. In 
particular, we expect \revision{an empirical convergence} order $\OO(h^{1/2})$ for uniform 
mesh-refinement.

The initial mesh $\TT_h$ for the computation
is shown in Figure~\ref{fig:hypsing:slit}. We compare adaptive 
mesh-refinement with parameter $\theta=1/2$ with uniform mesh-refinement.
The corresponding convergence graphs are visualized in 
Figure~\ref{fig:weaksing:slit:results}.
\revision{While uniform mesh-refinement leads to the expected rate 
$\OO(h^{1/2})=\OO(N^{-1/2})$, the adaptive algorithm regains the optimal
rate $\OO(N^{-3/2})$.}

\begin{figure}[t]
\begin{center}
\includegraphics[width=.45\textwidth]{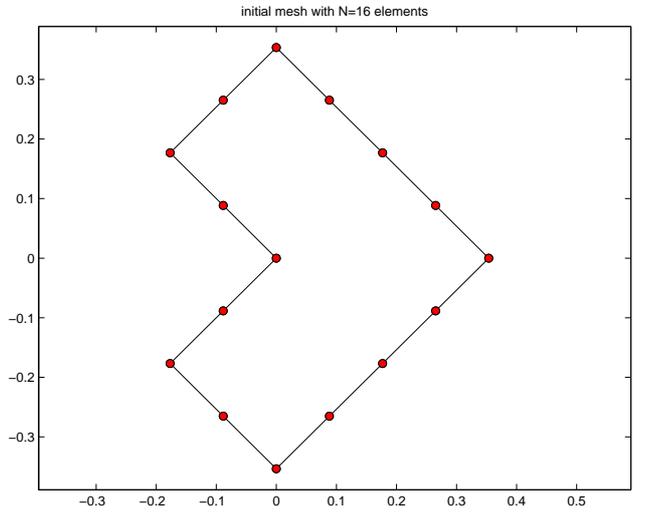}
\caption{Boundary $\Gamma=\partial\Omega$ and initial mesh $\TT_h$ with
$N=16$ elements of the numerical experiment from 
Section~\ref{example:weaksing}.}
\label{fig:weaksing:lshape}
\end{center}
\end{figure}
\begin{figure}[t]
\begin{center}
\includegraphics[width=.45\textwidth]{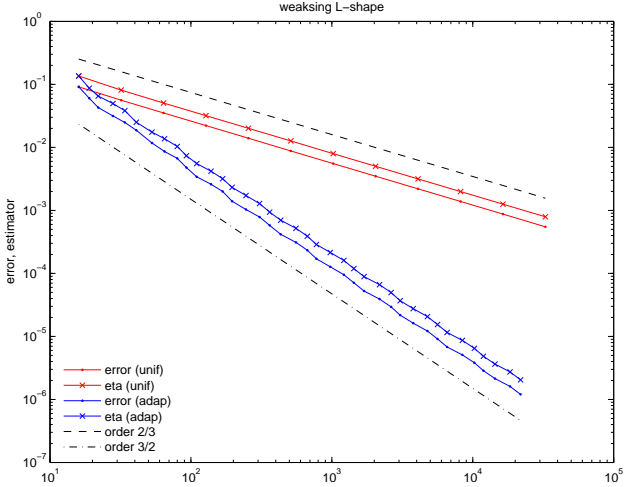}
\caption{Numerical outcome of experiment from Section~\ref{example:weaksing}.}
\label{fig:weaksing:lshape:results}
\end{center}
\end{figure}
%
\subsection{Numerical experiment on closed boundary}
\label{example:weaksing}

\noindent
We consider the rotated L-shaped domain \revision{from Figure~\ref{fig:weaksing:lshape}} with reentrant 
corner \revision{at the origin $(0,0)$}. 
We consider
$\Gamma=\partial\Omega$ and choose the right-hand side 
\revision{$f=(K+1/2)(\uu|_{\Gamma})\in H^{1/2}(\Gamma)$
with $K$ the double-layer potential, so}
that the weakly-singular integral equation~\eqref{eq:weaksing} is 
equivalent to some Dirichlet problem~\eqref{eq:weaksing:pde} with 
\revision{$\ff=0$}.
The exact solution \revision{of~\eqref{eq:weaksing:pde}} is prescribed as
\begin{align}
\uu(x) = r^{2/3}\cos(2\varphi/3)
\end{align}
in 2D polar coordinates $x=r\,(\cos\varphi,\sin\varphi)$
and admits a generic singularity at the reentrant corner.
The exact solution $\phi$ of~\eqref{eq:weaksing} is the normal derivative
$\phi=\partial_n\uu$.
\revision{We note that $\phi\in H^{\mf{2/3-1/2}-\eps}(\Gamma)$ for all $\eps>0$, and
we may hence expect convergence of order $\OO(h^{2/3})$ for uniform 
mesh-refinement.}

The L-shaped domain as well as the initial mesh $\TT_h$ for the computation
are shown in Figure~\ref{fig:weaksing:lshape}. We compare adaptive 
mesh-refinement with parameter $\theta=1/2$ with uniform mesh-refinement.
The corresponding convergence graphs are visualized in 
Figure~\ref{fig:weaksing:lshape:results}.
\revision{The proposed adaptive algorithm recovers the optimal order of
convergence.}

\section{Preliminaries}
\label{section:preliminaries}

\noindent
The purpose of this short section is to fix the notation of the spaces
involved and to recall standard results used in the following.

\subsection{Interpolation spaces}
\label{section:interpolation}

\noindent
Let $X_0$ and $X_1$ be Hilbert spaces with $X_0\supseteq X_1$ and continuous inclusion,
i.e., there exists some constant $C>0$ such that
\begin{align}
 \norm{x}{X_0} \le C\,\norm{x}{X_1}
 \quad\text{for all }x\in X_1.
\end{align}
Interpolation theory, e.g.~\cite{bl76}, provides a means to define
intermediate spaces 
\begin{align}
 X_1 \subseteq X_s := [X_0;X_1]_s\subseteq X_0
 \quad\text{for all }0<s<1,
\end{align}
where $[\cdot;\cdot]_s$ denotes the interpolation operator of, e.g., the
real $K$-method. \revision{The norm related to the intermediate interpolation space $X_s$ satisfies
\begin{align}\label{eq:interp:norm}
 \norm{x}{X_s} \le \norm{x}{X_0}^{1-s}\norm{x}{X_1}^s
 \quad\text{for all }x\in X_1.
\end{align}
The most important consequence, however,} is the so-called \emph{interpolation estimate}:
Let $X_0\supseteq X_1$ and $Y_0\supseteq Y_1$ be Hilbert spaces with continuous
inclusions. Let $T:X_0\to Y_0$ be a linear operator with $T(X_1)\subseteq Y_1$.
Assume that $T:X_0\to Y_0$ as well as $T:X_1\to Y_1$ are continuous, i.e.,
\begin{align}
\begin{split}
 \norm{Tx}{Y_0} &\le \c{norm0}\,\norm{x}{X_0}
 \quad\text{for all }x\in X_0,\\
 \norm{Tx}{Y_1} &\le \c{norm1}\,\norm{x}{X_1}
 \quad\text{for all }x\in X_1,
\end{split}
\end{align}
with the respective operator norms $\newc{norm0},\newc{norm1}>0$. Let $0<s<1$
and $X_s=[X_0;X_1]_s$ and $Y_s=[Y_0;Y_1]_s$. Then, $T:X_s\to Y_s$ is a 
well-defined linear and continuous operator with
\begin{align}\label{eq:interp:operator} 
 \norm{Tx}{Y_s} &\le \c{norm0}^{1-s}\c{norm1}^s\,\norm{x}{X_s}
 \quad\text{for all }x\in X_s.
\end{align}
\revision{Note that for other interpolation methods than the real $K$-method, 
the previous estimates~\eqref{eq:interp:norm} and~\eqref{eq:interp:operator} 
hold only up to some additional generic constants, see e.g.~\cite{bl76}.}

\subsection{Function spaces}
\label{section:spaces}

\noindent
\revision{Let $L^2(\Gamma)$} denote the space of square integrable functions on 
$\Gamma$, associated with the Hilbert norm
\begin{align}
 \norm{v}{L^2(\Gamma)}^2 := \int_\Gamma v^2\,d\Gamma.
\end{align}
Note that $\norm\cdot{L^2(\Gamma)}$ stems from the scalar product
\begin{align}
 \dual{v}{w}{L^2(\Gamma)} := \int_\Gamma vw\,d\Gamma.
\end{align}
\revision{Let $H^1(\Gamma)$} denote the closure of all Lipschitz continuous functions
on $\Gamma$ with respect to the Hilbert norm
\begin{align}
 \norm{v}{H^1(\Gamma)}^2
 := \norm{v}{L^2(\Gamma)}^2 + \norm{v'}{L^2(\Gamma)}^2.
\end{align}
\revision{Let $\H^1(\Gamma)$} denote the closure of all Lipschitz continuous functions
on $\Gamma$ with respect to the $H^1(\Gamma)$-norm which vanish at the tips
of $\Gamma$. We stress that both $H^1(\Gamma)$ and $\H^1(\Gamma)$ are dense
subspaces of $L^2(\Gamma)$ with respect to the $L^2(\Gamma)$-norm. Moreover,
it holds $H^1(\Gamma)=\H^1(\Gamma)$ in case of a closed boundary 
$\Gamma=\partial\Omega$.

Sobolev spaces of fractional order $0<s<1$ are defined by interpolation
\begin{align}
\begin{split}
 H^s(\Gamma) &:= [L^2(\Gamma);H^1(\Gamma)]_s,\\
 \H^s(\Gamma) &:= [L^2(\Gamma);\H^1(\Gamma)]_s.
\end{split}
\end{align}
\revision{To abbreviate notation, we shall also write $L^2(\Gamma) = H^0(\Gamma)=\H^0(\Gamma)$}.
It follows that all $H^s(\Gamma)$ and $\H^s(\Gamma)$ are dense subspaces of
$L^2(\Gamma)$ with respect to the $L^2(\Gamma)$-norm. Therefore, the 
dual spaces can be understood with respect to the extended $L^2(\Gamma)$-scalar 
product. For $-1\le s<0$, we define
\begin{align}
\begin{split}
 H^{-s}(\Gamma) &:= \H^s(\Gamma)^*,\\
 \H^{-s}(\Gamma) &:= H^s(\Gamma)^*.
\end{split}
\end{align}
It follows that $L^2(\Gamma)$ is dense in $H^{-s}(\Gamma)$ and $\H^{-s}(\Gamma)$
with respect to the associated norms. For $s=0$, we let $\H^0(\Gamma) := L^2(\Gamma) =: H^0(\Gamma)$.

We stress that interpolation theory also states the equalities
\begin{align}
\begin{split}
 H^{-s}(\Gamma) &= [H^{-1}(\Gamma);L^2(\Gamma)]_s,\\
 \H^{-s}(\Gamma) &= [\H^{-1}(\Gamma);L^2(\Gamma)]_s
\end{split}
\end{align}
in the sense of sets and equivalent norms~\cite{mclean}. Moreover, interpolation reveals
the continuous inclusions $\H^{\pm s}(\Gamma)\subseteq H^{\pm s}(\Gamma)$
as well as $\H^{\pm s}(\partial\Omega) = H^{\pm s}(\partial\Omega)$.

The analysis of the hyper-singular integral \revision{equation} further requires
\begin{align}
 H^{\pm s}_\star(\partial\Omega)
 := \set{v \in H^{\revision{\pm s}}(\partial\Omega)}{\dual{v}{1}{L^2(\partial\Omega)}=0}
\end{align}
for $0\le s\le 1$. We define $L^2_\star(\Gamma):=H^0_\star(\Gamma)$.
We again note that interpolation yields the equality
\begin{align}
 H^{\pm s}_\star(\partial\Omega)
 = [L^2_\star(\partial\Omega);H^1_\star(\partial\Omega)]_s.
\end{align}
Finally, \revision{$H^s_0(\Gamma)$} denotes either $\H^s(\Gamma)$ 
\revision{for $\Gamma\subsetneqq\partial\Omega$ resp.\
$H^s_\star(\partial\Omega)$ for $\Gamma=\partial\Omega$. \mf{In either case, $H^s_0(\Gamma)$ contains no constant function different from zero provided
that $\Gamma$ is connected.}}

\subsection{Discrete spaces}
\label{section:discrete}

\noindent
We assume that $\TT_h = \{T_1,\dots,T_N\}$ is a partition of $\Gamma$
into finitely many compact and affine line segments $T\in\TT_h$. With
each element $T\in\TT_h$, we associate an affine bijection
$\gamma_T:[0,1]\to T$. 

For $q\in\N_0$, \revision{let $\PP^q$ denote} the space of polynomials of degree $\le q$ 
on $\R$. With this, we define the space of $\TT_h$-piecewise polynomials by
\begin{align}
 \PP^q(\TT_h) := \set{v_h:\Gamma\to\R}{\forall T\in\TT\quad
 v_h\circ \gamma_T\in\PP^q}.
\end{align}
Note that functions $v_h\in\PP^q(\TT_h)$ are discontinuous in general. 
Special attention is \revision{paid} to the piecewise constants $\PP^0(\TT_h)$.

If continuity is required, we use the space
\begin{align}
 \SS^q(\TT_h) := \PP^q(\TT_h)\cap C(\Gamma)
\end{align}
of continuous splines of piecewise degree $q\ge1$. Special attention is 
\revision{paid} to the Courant space $\SS^1(\TT_h)$ of lowest order.

For the treatment of the hyper-singular integral equation, we additionally
define
\begin{align}
 \widetilde\SS^q(\TT_h) &:= \SS^q(\TT_h)\cap \widetilde H^1(\Gamma),\\
 \SS^q_\star(\TT_h) &:= \SS^q(\TT_h)\cap H^1_\star(\Gamma).
\end{align}
Finally, $\SS^q_0(\TT_h)$ denotes either $\widetilde\SS^q(\TT_h)$ 
\revision{for $\Gamma\subsetneqq\partial\Omega$ resp.\
$\SS^q_\star(\TT_h)$ for $\Gamma=\partial\Omega$.}

\subsection{Projections}
\label{section:operators}

\noindent
Let $X_h$ be a finite dimensional subspace of a Hilbert space $X$.
The $X$-orthogonal projection onto $X_h$ is the unique linear operator
$\P_h:X\to X_h$ such that, for all $x\in X$ and $x_h\in X_h$, it holds
\begin{align}\label{eq:orthproj}
\begin{split}
 \P_h x_h &= x_h,\\
 \dual{\P_hx}{x_h}X &= \dual{x}{x_h}X.
\end{split}
\end{align}
This implies the Pythagoras theorem
\begin{align}
 \norm{x}X^2 = \norm{\P_hx}X^2 + \norm{(1-\P_h)x}X^2
\end{align}
and consequently
\begin{align}\label{eq:bestapproximation}
 \norm{(1-\P_h)x}X
 = \min_{x_h\in X_h}\norm{x-x_h}X.
\end{align}

\def\JJ{\mathcal J}
In~\cite{scottzhang}, a quasi-interpolation operator 
$\JJ_h^\Omega:H^1(\Omega)\to\SS^1(\TT_h^\Omega)$ is introduced. Here, 
$\Omega\subset\R^d$ for $d\ge2$ is a Lipschitz domain, $\TT_h^\Omega$ is a 
conforming triangulation of $\Omega$ into simplices, and $\SS^1(\TT_h^\Omega)$ 
is the lowest-order Courant finite element space. It is shown that $\JJ_h^\Omega$ has a 
local first-order approximation property and is a linear and continuous projection
onto $\SS^1(\TT_h^\Omega)$. Moreover, $\JJ_h^\Omega$ preserves discrete boundary data,
since the boundery values $(\JJ_h v)|_\Gamma$ depend only on the trace 
$v|_\Gamma$ with $\Gamma=\partial\Omega$. 

Let $\TT_h$ denote the partition of
$\Gamma$ induced by $\TT_h^\Omega$. Then, the mentioned properties of $\JJ_h$ 
yield that the restriction 
$\JJ_h:=\JJ_h^\Omega(\cdot)|_\Gamma:H^{1/2}(\Gamma)\to\SS^1(\TT_h)$ to the 
trace space $H^{1/2}(\Gamma)$
yields a well-defined, linear, and continuous projection onto $\SS^1(\TT_h)$
with respect to the $H^{1/2}(\Gamma)$-norm. 
However, arguing along the lines of the domain-based proof 
from~\cite{scottzhang}, we see that $\JJ_h$ has the following properties.
For an element $T\in\TT_h$, we denote by 
\begin{align}\label{eq:hypsing:patch}
 \omega_T:=\bigcup\set{T'\in\TT_h}{T\cap T'\neq\emptyset}
\end{align}
its patch, i.e., the union of $T$ and its (at most two) neighbours. 
We shall use the following properties of $\JJ_h$:

\begin{itemize}
\item[(i)] $\JJ_hv$ is well-defined for all $v\in L^2(\Gamma)$.
\item[(ii)] $(\JJ_hv)|_T$ depends only on the function values $v|_{\omega_T}$ on 
the patch of $T\in\TT_h$.
\item[(iii)] $\JJ_h$ is locally $L^2$-stable, for all $v\in L^2(\Gamma)$,
\begin{align}\label{eq:sz1}
 \norm{(1-\JJ_h)v}{L^2(T)}
 \le \c{sz}\,\norm{v}{L^2(\omega_T)}.
\end{align}
\item[(iv)] $\JJ_h$ is locally $H^1$-stable, for all $v\in H^1(\Gamma)$,
\begin{align}\label{eq:sz2}
 \norm{((1-\JJ_h)v)'}{L^2(T)}
 \le \c{sz}\,\norm{v'}{L^2(\omega_T)}.
\end{align}
\item[(v)] $\JJ_h$ has a first-order approximation property, 
for all $v\in H^1(\Gamma)$,
\begin{align}\label{eq:sz3}
 \norm{(1-\JJ_h)v}{L^2(T)}
 \le \c{sz}\,\norm{hv'}{L^2(\omega_T)}.
\end{align}
\item[(vi)] The constant $\newc{sz}>0$ depends only on the local 
mesh-ratio $\kappa(\TT_h)$.
\end{itemize}
Since $\omega_T$ consists of at most three elements, the $\ell_2$-sums of
the estimates~\eqref{eq:sz1}--\eqref{eq:sz3} also provide global estimates
with $T$ and $\omega_T$ replaced by $\Gamma$. From (iii), we thus see that 
$\JJ_h\in L(L^2(\Gamma);L^2(\Gamma))$. The combination of (iii)--(iv) yields
$\JJ_h\in L(H^1(\Gamma);H^1(\Gamma))$. In particular, the interpolation 
estimate~\eqref{eq:interp:operator} provides $\JJ_h\in L(H^s(\Gamma);H^s(\Gamma))$, for all $0\le s\le1$.

\section{A~posteriori error analysis}
\label{section:aposteriori}

\noindent
In this section, we show that under appropriate assumptions, the ZZ-type error 
estimators proposed provide an upper bound for the error (\emph{reliability}) and, up to some higher-order terms, also a lower
bound for the error (\emph{efficiency}). Our analysis 
builds on
equivalence of seminorms on finite dimensional spaces and scaling arguments.
The elementary, but abstract result employed reads as follows: If $X$ is a 
finite dimensional
space with seminorms $|\cdot|_1$ and $|\cdot|_2$, an estimate of the type
\begin{align}
 |x|_1 \le C\,|x|_2
 \quad\text{for all }x\in X
\end{align}
and some independent constant $C>0$ is equivalent to the \revision{inclusion}
\begin{align}
 \set{x\in X}{|x|_2=0} \subseteq \set{x\in X}{|x|_1=0}
\end{align}
of the respective null spaces. This result is used for polynomial spaces on
\emph{element patches}. To this end, the restricted partition of the patch
$\omega_T$ from~\eqref{eq:hypsing:patch} is denoted by
\begin{align}
 \TT_h|_{\omega_T}:=\set{T'\in\TT_h}{T\cap T'\neq\emptyset}
\end{align}
for all $T\in\TT$.

\subsection{Hyper-singular integral equation}
\label{section:hypsing:analysis}

\noindent
Recall the abbreviate notation $H^{1/2}_0(\Gamma)$ from 
Section~\ref{section:spaces} and note that 
\begin{align}
 \enorm{v}^2:=\dual{Wv}{v}{\revision{L^2}(\Gamma)}
\end{align}
defines an equivalent Hilbert norm on $H^{1/2}_0(\Gamma)$.
Because of $H^{1/2}(\partial\Omega) = \H^{1/2}(\partial\Omega)$
\revision{even with equal norms},
we can simply use the norm $\norm\cdot{\H^{1/2}(\Gamma)}\simeq\enorm\cdot$
throughout the \revision{section}.

We start with the derivation of an upper bound. The proof relies on the 
assumption that $\TT_h$ is the uniform refinement of some coarser mesh
$\TT_{2h}$ and on some saturation assumption~\eqref{eq:saturation}. 
While the first assumption can easily be
achieved implementationally, the latter is essentially equivalent to the
assumption that the numerical scheme has reached an asymptotic regime, 
see~\cite[Section~5.2]{fp} \revision{for discussion and numerical evidence}.

\begin{theorem}\label{prop:hypsing}
Let $\TT_h$ be the uniform refinement of some mesh $\TT_{2h}$, i.e.\
all elements $T\in\TT_{2h}$ are bisected into two sons $T_1,T_2\in\TT_h$ of
half length. Let $u_h\in\SS^1_0(\TT_h)$ and $u_{2h}\in\SS^1_0(\TT_{2h})$ 
be the respective Galerkin solutions. Then, it holds
\begin{align}\label{eq:hypsing:hh2}
 \enorm{u_h-u_{2h}}
 \le\c{hypsing}\,\eta_h
\end{align}
with some constant $\newc{hypsing}>0$ which depends only on
$\Gamma$ and all possible shapes of element patches~\eqref{eq:hypsing:patch}.
Under the saturation assumption
\begin{align}\label{eq:saturation}
 \enorm{u-u_{h}} \le \Csat\,\enorm{u-u_{2h}}
\end{align}
with some uniform constant $0<\Csat<1$, there holds
\begin{align}\label{eq:hypsing:hh22}
 \Csat^{-1}\,\enorm{u-u_h}
 \le \enorm{u-u_{2h}} 
 \le \frac{\c{hypsing}}{(1-\Csat^2)^{1/2}}\,\eta_{h}.
\end{align}
\end{theorem}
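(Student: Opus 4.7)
My plan is to derive~\eqref{eq:hypsing:hh2} by a Galerkin-best-approximation argument combined with a localization-and-scaling estimate, and then to deduce~\eqref{eq:hypsing:hh22} from Galerkin-Pythagoras plus the saturation assumption.

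For~\eqref{eq:hypsing:hh2}, observe first that since $\SS^1_0(\TT_{2h})\subseteq\SS^1_0(\TT_h)$ and $u_{2h}$ is the Galerkin solution on the coarse space, $u_{2h}$ is the $\enorm{\cdot}$-best approximation of $u_h$ in $\SS^1_0(\TT_{2h})$. I would therefore bound $\enorm{u_h-u_{2h}}\le\enorm{e}$ with $e:=u_h-v_{2h}$, where $v_{2h}\in\SS^1_0(\TT_{2h})$ is the nodal interpolant of $u_h$ at the nodes of $\TT_{2h}$ (shifted by a constant in the closed-boundary case to enforce mean zero; this does not change the derivative). By construction $e$ vanishes at every node of $\TT_{2h}$, so it decomposes as a sum of bubbles supported in the disjoint coarse elements $\hat T\in\TT_{2h}$. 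Using the equivalence $\enorm{\cdot}\simeq\norm{\cdot}{\H^{1/2}(\Gamma)}$ together with a Faermann-type localization of the non-local Slobodeckij double integral (exploiting that $e$ vanishes at $\partial\hat T$), one obtains $\enorm{e}^2\le C\sum_{\hat T}\norm{e}{\H^{1/2}(\hat T)}^2$. The interpolation estimate~\eqref{eq:interp:norm} combined with an elementwise Poincar\'e inequality then yields $\norm{e}{\H^{1/2}(\hat T)}^2\le C h_{\hat T}\norm{e'}{L^2(\hat T)}^2$, which sums to $\enorm{e}^2\le C\norm{h^{1/2}e'}{L^2(\Gamma)}^2$. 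The crucial elementwise identity is that, on any $T\in\TT_h$ meeting its sibling $T'\subseteq\hat T$ at the interior coarse node $z$, a short computation gives $e'|_T=\tfrac{\length(T')}{\length(T)+\length(T')}(u_h'|_T-u_h'|_{T'})=(1-\AA_h)u_h'|_T(z)$, i.e.\ $e'|_T$ equals exactly the pointwise value at $z$ of the piecewise linear ZZ-function. Equivalence of seminorms on the two-dimensional space of linear polynomials on $T$ — made uniform via a scaling argument with constant depending only on the patch shapes — gives $h_T|e'|_T|^2\le C\norm{(1-\AA_h)u_h'}{L^2(T)}^2$. Multiplying by $h_T$ and summing delivers $\norm{h^{1/2}e'}{L^2(\Gamma)}^2\le C\eta_h^2$, which combined with the previous steps yields~\eqref{eq:hypsing:hh2}. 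For~\eqref{eq:hypsing:hh22}, Galerkin orthogonality gives the Pythagoras identity $\enorm{u-u_{2h}}^2=\enorm{u-u_h}^2+\enorm{u_h-u_{2h}}^2$; substituting the saturation hypothesis and rearranging gives $(1-\Csat^2)\enorm{u-u_{2h}}^2\le\enorm{u_h-u_{2h}}^2\le\c{hypsing}^2\eta_h^2$, while $\Csat^{-1}\enorm{u-u_h}\le\enorm{u-u_{2h}}$ is saturation itself.

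The main obstacle is the Faermann-type localization of the $\H^{1/2}(\Gamma)$-norm to bubbles on disjoint coarse elements. After splitting the Slobodeckij double integral over pairs $(\hat T_1,\hat T_2)$, the diagonal contributions recover the local $H^{1/2}$-seminorms, but the off-diagonal contributions must be controlled by the $\H^{1/2}(\hat T)$ weight $\int_{\hat T}|e|^2/d_{\partial\hat T}$, using the vanishing of $e$ on $\partial\hat T$ and the bounded local mesh-ratio~\eqref{eq:kappa}. A secondary technical point is that the finite-dimensional seminorm equivalence used in the final pointwise comparison must be formulated so that its constant depends only on the admissible patch shapes — again guaranteed by~\eqref{eq:kappa}.
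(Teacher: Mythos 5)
Your argument is correct, and both halves of the statement are covered: the Pythagoras-plus-saturation part of~\eqref{eq:hypsing:hh22} coincides with the paper's proof, but your derivation of the key estimate~\eqref{eq:hypsing:hh2} takes a genuinely different route. The paper first invokes the known equivalence $\enorm{u_h-u_{2h}}\simeq\norm{h^{1/2}(1-\Pi_{2h})u_h'}{L^2(\Gamma)}$ with the $(h-h/2)$-type quantity from~\cite{efgp}, so that all the nonlocal $\H^{\pm1/2}$-analysis is outsourced to that reference, and what remains is a purely discrete comparison of two seminorms on $\PP^0(\TT_h|_{\omega_T})$ settled by a null-space inclusion and scaling. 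You instead work directly: nestedness plus Galerkin orthogonality make $u_{2h}$ the $\enorm\cdot$-best approximation of $u_h$ in $\SS^1_0(\TT_{2h})$, you insert the coarse nodal interpolant, localize the $\H^{1/2}$-norm of the resulting sum of disjointly supported bubbles, pass to $h_{\widehat T}^{1/2}$-weighted $L^2$-norms of derivatives via interpolation and Poincar\'e, and close with the exact identity $e'|_T=\tfrac{\length(T')}{\length(T)+\length(T')}(u_h'|_T-u_h'|_{T'})=\big((1-\AA_h)u_h'\big)|_T(z)$ followed by a norm equivalence for affine functions on $T$; I checked this identity and it is correct (for siblings of equal length the prefactor is simply $1/2$). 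The one step you rightly flag as the main obstacle, the superadditivity $\enorm{e}^2\lesssim\sum_{\widehat T}\norm{e}{\H^{1/2}(\widehat T)}^2$ for disjointly supported $H^{1/2}_{00}$-bubbles, does hold without logarithmic loss precisely because the off-diagonal Slobodeckij contributions are absorbed by the weighted terms $\int_{\widehat T}|e|^2/{\rm dist}(\cdot,\partial\widehat T)\,d\Gamma$, so your proof is complete modulo writing out that standard computation. What each approach buys: the paper's is shorter and, being a two-sided discrete comparison, also relates $\eta_h$ to the $(h-h/2)$-estimator, while yours is self-contained, produces only the upper bound actually claimed in~\eqref{eq:hypsing:hh2}, and makes the dependence of $\c{hypsing}$ on $\Gamma$ and the patch shapes completely explicit.
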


\begin{proof}
Let $\Pi_{2h}:L^2(\Gamma)\to\PP^0(\TT_{2h})$ denote the $L^2$-orthogonal 
projection onto the $\TT_{2h}$-piecewise constants, i.e.\ the piecewise 
integral mean operator
\begin{align}\label{eq:L2:P0}
 (\Pi_{2h}\psi)|_{\widehat T}
 = \frac{1}{\length(\widehat T)}
 \int_{\widehat T}\psi\,d\Gamma
 \quad\text{for all }\widehat T\in\TT_{2h}.
\end{align}
According to~\cite{efgp}, it holds that
\begin{align*}
 \enorm{u_h-u_{2h}} \simeq \norm{h^{1/2}(1-\Pi_{2h})u_h'}{L^2(\Gamma)},
\end{align*}
where the hidden constants depend only on $\Gamma$ and the local
mesh-ratio $\kappa(\TT_h)$ from~\eqref{eq:kappa}.
To prove~\eqref{eq:hypsing:hh2}, we will verify 
\begin{align}\label{eq:hypsing:toshow1}
 \norm{h^{1/2}(1-\Pi_{2h})u_{h}'}{L^2(T)}
 \lesssim \norm{h^{1/2}(1-\AA_{h})u_{h}'}{L^2(\omega_T)}
\end{align}
for all $T\in\TT_h$ in the following. Both sides 
of~\eqref{eq:hypsing:toshow1} define seminorms on $\PP^0(\TT_h|_{\omega_T})$, 
where $u_h'$ is replaced by an arbitrary $\psi_h\in\PP^0(\TT_h|_{\omega_T})$. 
It thus suffices to show that $\norm{h^{1/2}(1-\AA_{h})\psi_h}{L^2(\omega_T)}=0$
implies $\norm{h^{1/2}(1-\Pi_{2h})\psi_h}{L^2(T)}$\linebreak$=0$. From 
$\norm{h^{1/2}(1-\AA_{h})\psi_h}{L^2(\omega_T)}=0$ and hence
$\psi_h=\AA_h\psi_h$ on $\omega_T$, we see that $\psi_h$ is constant on
$\omega_T$, since $\psi_h$ is both, $\TT_h$-piecewise constant and continuous
on $\omega_T$. By assumption, $T$ has a brother $T'\in\TT_h$ such that 
$\widehat T = T\cup T'\in\TT_{2h}$. Moreover, the definition of the patch
and $T\cap T'\neq\emptyset$ yield $\widehat T\subseteq \omega_T$. Therefore, 
$\psi_h$ is constant on $\widehat T$ so that $\psi_h = \Pi_{2h}\psi_h$ on 
$\widehat T$. 
This proves $\norm{h^{1/2}(1-\Pi_{2h})\psi_h}{L^2(T)}=0$ and thus verifies
\begin{align*}
 \norm{h^{1/2}(1-\Pi_{2h})\psi_h}{L^2(T)}
 \lesssim \norm{h^{1/2}(1-\AA_{h})\psi_h}{L^2(\omega_T)}
\end{align*}
for all $T\in\TT_h$ and $\psi_h\in\PP^0(\TT_h)$. Finally, a scaling
argument proves that the hidden constant depends only on the shape of the
patch $\omega_T$. We note that each element $T'\in\revision{\TT_h}$ is contained in
at most three patches. Taking the $\ell_2$-sum in~\eqref{eq:hypsing:toshow1}
over all elements $T\in\revision{\TT_h}$, we \revision{arrive} at
\begin{align}\label{eq:tmp}
 \norm{h^{1/2}(1-\Pi_{2h})\psi_h}{L^2(\Gamma)}
 \lesssim \norm{h^{1/2}(1-\AA_{h})\psi_{h}}{L^2(\Gamma)}
\end{align}
for all $\psi_h\in\PP^0(\TT_h)$. Plugging in $\psi_h=u_h'$, 
we conclude the proof of~\eqref{eq:hypsing:hh2}.

The proof of~\eqref{eq:hypsing:hh22} follows from abstract principles.
According to the Galerkin orthogonality
\begin{align*}
 \dual{W(u-u_h)}{v_h}{L^2(\Gamma)} = 0
 \quad\text{for all }v_h\in\SS^1_0(\TT_h),
\end{align*}
we obtain a Pythagoras theorem for the induced Hilbert norm
\begin{align*}
 \enorm{u-u_h}^2 + \enorm{u_h-u_{2h}}^2 = \enorm{u-u_{2h}}^2,
\end{align*}
where we use $v_h = u_h-u_{2h}$. Together with the saturation 
assumption~\eqref{eq:saturation}, this results in
\begin{align*}
 \Csat^{-1}\,\enorm{u-u_h} \le \enorm{u-u_{2h}} 
 \le \frac1{(1-\Csat^2)^{1/2}}\,\enorm{u_h-u_{2h}},
\end{align*}
and~\eqref{eq:hypsing:hh22} follows.
\end{proof}

\begin{remark}
With the same techniques as in the proof of Theorem~\ref{prop:hypsing},
one can prove that the ZZ-type error estimator $\eta_h$ is an upper
bound for the estimator $\mu_h$ from~\cite{cp:hypsing} which is based on
averaging on large patches. The analysis then requires that $\TT_h$ is 
a refinement of a coarser mesh $\TT_{kh}$ for some $k\ge2$ which depends
only on $\Gamma$. Then, the saturation assumption~\eqref{eq:saturation}
is formally avoided. However, the parameter $k$ is still unknown,
although $k=2$ empirically appears to be sufficient, see e.g.\ the
numerical experiments in~\cite{cp:hypsing}.
Moreover, the upper bound~\eqref{eq:hypsing:hh22} holds only up to some 
additional best approximation error
\begin{align*}	
 \enorm{u-u_h} \lesssim \eta_h + \min_{U_h\in\SS^2_0(\TT_{kh})}\enorm{u-U_h}
\end{align*}
with higher-order elements $\SS^2_0(\TT_{kh}):=\PP^2(\TT_{kh})\cap\H^{1/2}(\Gamma)
\subseteq H^1(\Omega)$ which are piecewise quadratic and globally continuous.
If the exact solution $u$ is smooth or if 
the mesh is appropriately graded to the singularities of $u$, this 
additional term is of higher-order. The reader is referred 
to~\cite{cp:steinbach} for further discussions.\qed
\end{remark}

We next prove the lower bound. Unlike the reliability estimate~\eqref{eq:hypsing:hh22},
the following efficiency estimate~\eqref{eq:hypsing:efficient} does not 
rely on the saturation assumption~\eqref{eq:saturation}, but holds only up 
to some further best approximation error with higher-order elements. 
If the exact solution solution $u$ of~\eqref{eq:hypsing} is smooth or if 
the mesh is properly adapted to the singularities of $u$, this term becomes 
a higher-order term.

Let $\SS^{2,1}(\TT_h) := \PP^2(\TT_h)\cap C^1(\Gamma)$ denote the set of all
$\TT_h$-piecewise quadratic polynomials $p$ such that $p$ as well as its 
derivative $p'$ are continuous. With $\SS^{2,1}_0(\TT_h):=\SS^{2,1}(\TT_h)\cap H^{1/2}_0(\Gamma)$, our efficiency result then reads as follows:

\begin{theorem}\label{prop:hypsing:efficient}
It holds
\begin{align}\label{eq:hypsing:efficient}
 \c{hypsing2}^{-1}\,\eta_h \le \enorm{u-u_h} 
 + \min_{U_h\in\SS^{2,1}_0(\TT_h)}\,\enorm{u-U_h}.
\end{align}
The constant $\newc{hypsing2}>0$ depends only on 
$\Gamma$ and all possible shapes of element patches~\eqref{eq:hypsing:patch}.\end{theorem}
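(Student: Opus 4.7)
My plan is to fix an arbitrary $V_h\in\SS^{2,1}_0(\TT_h)$, split
$(1-\AA_h)u_h' = (1-\AA_h)(u_h-V_h)' + (1-\AA_h)V_h'$,
and show that each piece is controlled by $\enorm{u-u_h}+\enorm{u-V_h}$, so that a final infimum over $V_h$ yields the claim. For the second summand, $V_h'\in\SS^1(\TT_h)\subset H^1(\Gamma)$, and the Cl\'ement operator $\AA_h$ enjoys the standard first-order approximation bound $\|(1-\AA_h)V_h'\|_{L^2(T)}\lesssim h_T\,\|V_h''\|_{L^2(\omega_T)}$; this follows from local $L^2$-stability of $\AA_h$ combined with reproduction of constants on each patch. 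For the first summand, plain local $L^2$-stability of $\AA_h$ already gives $\|(1-\AA_h)(u_h-V_h)'\|_{L^2(T)}\lesssim\|(u_h-V_h)'\|_{L^2(\omega_T)}$.

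Multiplying by $h_T^{1/2}$, squaring, and summing over $T\in\TT_h$---using that each element lies in at most three patches and that $\kappa(\TT_h)\le\gamma$ bounds the shape distortion---I would arrive at
\[ \eta_h^2\lesssim\sum_{T\in\TT_h}h_T\,\|(u_h-V_h)'\|_{L^2(T)}^2\;+\;\sum_{T\in\TT_h}h_T^3\,\|V_h''\|_{L^2(T)}^2. \]
The key reduction is that $u_h\in\SS^1(\TT_h)$ forces $u_h''\equiv 0$ in every element interior, so that $V_h''|_T=(V_h-u_h)''|_T$; since $(V_h-u_h)|_T\in\PP^2$, the elementary polynomial inverse estimate yields $h_T\,\|V_h''\|_{L^2(T)}\lesssim\|(V_h-u_h)'\|_{L^2(T)}$. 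This absorbs the second sum into the first, giving $\eta_h^2\lesssim\|h^{1/2}(u_h-V_h)'\|_{L^2(\Gamma)}^2$.

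To close the estimate, I would invoke the fractional inverse inequality $\|h^{1/2}w_h'\|_{L^2(\Gamma)}\lesssim\|w_h\|_{H^{1/2}(\Gamma)}$ valid for $w_h\in\SS^q(\TT_h)$ under the mesh-ratio hypothesis, obtained by interpolating (in the sense of Section~\ref{section:interpolation}) between the trivial identity $\|w_h\|_{L^2}=\|w_h\|_{L^2}$ and the classical discrete inverse inequality $\|w_h'\|_{L^2(\Gamma)}\lesssim\|h^{-1}w_h\|_{L^2(\Gamma)}$. Since $u_h-V_h\in\SS^2(\TT_h)\cap H^{1/2}_0(\Gamma)$ and $\enorm{\cdot}\simeq\|\cdot\|_{H^{1/2}(\Gamma)}$ on $H^{1/2}_0(\Gamma)$, the triangle inequality $\enorm{u_h-V_h}\le\enorm{u-u_h}+\enorm{u-V_h}$ followed by the infimum over $V_h\in\SS^{2,1}_0(\TT_h)$ delivers the desired efficiency bound.

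I expect the main obstacle to be the fractional inverse estimate on $\SS^2(\TT_h)$: for $\SS^1$ under uniform meshes it is classical, but on adaptively generated meshes it must be derived via the interpolation machinery together with the integer-order inverse inequality, and one has to keep track that the resulting constant depends only on $\kappa(\TT_h)$ and hence---through the refinement rule---only on $\Gamma$ and the finitely many possible patch shapes. The remaining ingredients (local $L^2$- and $H^1$-stability of $\AA_h$, polynomial inverse estimates, and patch-overlap bookkeeping) are routine consequences of the material gathered in Section~\ref{section:preliminaries}.
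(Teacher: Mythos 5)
Your argument is correct in substance, but the first half takes a genuinely different route from the paper. The paper reduces $\eta_h$ to $\norm{h^{1/2}(1-J_h)u_h'}{L^2(\Gamma)}$ via a null-space/seminorm argument on patches with the Scott--Zhang projection $J_h$, and then exploits the projection property $J_h\Psi_h=\Psi_h$ to subtract an arbitrary $\Psi_h\in\SS^1(\TT_h)$; you instead split $u_h'=(u_h-V_h)'+V_h'$ and control the smooth part by the first-order approximation property~\eqref{eq:A:H1} of $\AA_h$ itself, converting $h_T\norm{V_h''}{L^2(T)}$ back into $\norm{(V_h-u_h)'}{L^2(T)}$ by an elementary polynomial inverse estimate (using $u_h''|_T=0$). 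Both roads lead to the same intermediate bound $\eta_h\lesssim\norm{h^{1/2}(u_h'-\Psi_h)}{L^2(\Gamma)}$ with $\Psi_h=V_h'\in\SS^1(\TT_h)$, and from there the two proofs coincide: inverse estimate into $\H^{-1/2}(\Gamma)$, continuity of the arclength derivative, triangle inequality, and the best-approximation property (your final infimum over $V_h$ is equivalent to the paper's use of the energy projection $\P_h$). Your route avoids the Scott--Zhang operator entirely, at the modest price of needing the approximation property of $\AA_h$ and a polynomial inverse estimate; the paper's seminorm argument is more abstract but works verbatim for the modified averaging operator of the weakly-singular case.

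The one point you should repair is the ``fractional inverse inequality'' $\norm{h^{1/2}w_h'}{L^2(\Gamma)}\lesssim\norm{w_h}{H^{1/2}(\Gamma)}$. Deriving it by interpolating the identity against the integer-order inverse estimate is not sound as stated: on adaptively refined meshes the interpolation norm of the discrete space equipped with the $L^2$- and (weighted) $H^1$-norms need not be uniformly equivalent to the restricted $H^{1/2}$-norm, which is precisely why such estimates are nontrivial. The clean fix is to factor the step exactly as the paper does: apply the inverse estimate of \cite[Thm~3.6]{ghs} to the piecewise polynomial $(u_h-V_h)'\in\PP^1(\TT_h)$, giving $\norm{h^{1/2}(u_h-V_h)'}{L^2(\Gamma)}\lesssim\norm{(u_h-V_h)'}{\H^{-1/2}(\Gamma)}$, and then invoke Lemma~\ref{lemma:cs} to bound this by $\norm{u_h-V_h}{\H^{1/2}(\Gamma)}\simeq\enorm{u_h-V_h}$. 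With that substitution, your proof is complete, and the constant depends only on $\Gamma$, $\kappa(\TT_h)$, and the finitely many patch shapes, as required.
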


The proof requires the following probably well-known lemma. For the 
convenience of the reader, we include the proof also here.

\begin{lemma}\label{lemma:cs}
For $0\le s\le1$, the arc-length derivative induces linear and continuous operators $(\cdot)':H^{s}(\Gamma)\to H^{s-1}(\Gamma)$ and 
$(\cdot)':\H^{s}(\Gamma)\to \H^{s-1}(\Gamma)$.
\end{lemma}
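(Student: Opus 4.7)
The plan is to verify boundedness at the two endpoints $s=0$ and $s=1$ and then invoke the interpolation estimate~\eqref{eq:interp:operator} together with the identities $H^s(\Gamma)=[L^2(\Gamma);H^1(\Gamma)]_s$, $\H^s(\Gamma)=[L^2(\Gamma);\H^1(\Gamma)]_s$, and their negative-order counterparts from Section~\ref{section:spaces}.

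The case $s=1$ is immediate: $\norm{v'}{L^2(\Gamma)}\le\norm{v}{H^1(\Gamma)}$ is just the definition of the $H^1$-norm, and the same bound yields $(\cdot)':\H^1(\Gamma)\to L^2(\Gamma)=\H^0(\Gamma)$.

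For $s=0$, I use the duality characterizations $H^{-1}(\Gamma)=\H^1(\Gamma)^*$ and $\H^{-1}(\Gamma)=H^1(\Gamma)^*$. Given $v\in L^2(\Gamma)$, define $v'$ on a test function $\varphi$ by
\begin{align*}
 \dual{v'}{\varphi}{L^2(\Gamma)} := -\int_\Gamma v\,\varphi'\,d\Gamma,
\end{align*}
where $\varphi\in\H^1(\Gamma)$ for the first assertion and $\varphi\in H^1(\Gamma)$ for the second. Since $v,\varphi'\in L^2(\Gamma)$, the Cauchy--Schwarz inequality gives $|\dual{v'}{\varphi}{L^2(\Gamma)}|\le\norm{v}{L^2(\Gamma)}\norm{\varphi}{H^1(\Gamma)}$, so $v'$ is a bounded linear functional on the respective test space with dual norm controlled by $\norm{v}{L^2(\Gamma)}$. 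Integration by parts then confirms consistency with the classical arc-length derivative whenever the latter exists: the boundary contributions at the tips of $\Gamma$ vanish either because $\varphi\in\H^1$ vanishes there or because $v\in\H^1$ does, so both definitions agree.

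With the endpoint bounds in hand, I apply~\eqref{eq:interp:operator} to the pair $(X_0,X_1)=(L^2(\Gamma),H^1(\Gamma))$ with codomain pair $(Y_0,Y_1)=(H^{-1}(\Gamma),L^2(\Gamma))$, and analogously to the tilde-versions $(L^2,\H^1)$ and $(\H^{-1},L^2)$, to deduce continuity of $(\cdot)':H^s(\Gamma)\to H^{s-1}(\Gamma)$ and $(\cdot)':\H^s(\Gamma)\to\H^{s-1}(\Gamma)$ for $0<s<1$. The most delicate point I anticipate is the $s=0$ step: one must see that the duality-based definition of $v'$ for $v\in L^2$ really is the natural extension of the arc-length derivative used elsewhere in the paper, and that the pairings in~\eqref{eq:interp:operator} are matched correctly with the interpolation identities of Section~\ref{section:spaces}. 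Once this is settled, the remaining verification is routine.
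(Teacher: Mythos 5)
Your proof is correct and follows essentially the same route as the paper: the endpoint $s=1$ is the definition of the $H^1$-norm, the endpoint $s=0$ is handled via integration by parts and the dualities $H^{-1}(\Gamma)=\H^1(\Gamma)^*$, $\H^{-1}(\Gamma)=H^1(\Gamma)^*$ (with the boundary terms at the tips killed by the appropriate space), and the intermediate cases follow from the interpolation estimate~\eqref{eq:interp:operator}. The only cosmetic difference is that you define $v'$ distributionally on all of $L^2(\Gamma)$ and then check consistency, whereas the paper proves the bound $\norm{v'}{H^{-1}(\Gamma)}\le\norm{v}{L^2(\Gamma)}$ on the dense subspace $H^1(\Gamma)$ and extends by density.
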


\begin{proof}
For $s=1$, it holds 
\begin{align*}
 \norm{v'}{L^2(\Gamma)}\le \norm{v}{H^1(\Gamma)}
 \quad\text{for all }v\in H^1(\Gamma)
\end{align*}
and, by integration by parts,
\begin{align*}
 \dual{v'}{w}{L^2(\Gamma)} = -\dual{v}{w'}{L^2(\Gamma)}
 \le \norm{v}{L^2(\Gamma)}\norm{w}{H^1(\Gamma)}.
\end{align*}
for all $w\in\H^1(\Gamma)$. Note that here we require either that
$\Gamma=\partial\Omega$ or that $w$ (or $v$) vanishes at the tips of $\Gamma$.
By definition of the duality $H^{-1}(\Gamma)=\H^1(\Gamma)^*$, this yields
\begin{align*}
 \norm{v'}{H^{-1}(\Gamma)}\le \norm{v}{L^2(\Gamma)}
 \quad\text{for all }v\in H^1(\Gamma).
\end{align*}
Since $H^1(\Gamma)$ is dense in $L^2(\Gamma)$, we obtain
continuity of $(\cdot)':L^2(\Gamma)\to H^{-1}(\Gamma)$, i.e.\ the last
estimate holds even for all $v\in L^2(\Gamma)$. Finally, the interpolation 
\revision{estimate~\eqref{eq:interp:operator}} reveals
\begin{align*}
 \norm{v'}{H^{s-1}(\Gamma)} \le \norm{v}{H^s(\Gamma)}
 \quad\text{for all }v\in H^s(\Gamma),
\end{align*}
i.e.\ $(\cdot)':H^{s}(\Gamma)\to H^{s-1}(\Gamma)$ is a linear and
continuous operator, even with operator norm $1$.

To prove the same statement for $(\cdot)':\H^{s}(\Gamma)\to \H^{s-1}(\Gamma)$,
recall the duality $\H^{-1}(\Gamma)=H^1(\Gamma)^*$. With $v\in\H^1(\Gamma)$ and
$w\in H^1(\Gamma)$ all foregoing steps remain valid with nothing but the
obvious modifications.
\end{proof}

\begin{proof}[Proof of Theorem~\ref{prop:hypsing:efficient}]
Let $J_h:L^2(\Gamma)\to\SS^1(\Gamma)$ denote the Scott-Zhang projection
\revision{from Section~\ref{section:operators}}.
We first show that
\begin{align}\label{eq:hypsing:tmp}
 \norm{h^{1/2}(1-\AA_h)\psi_h}{L^2(\Gamma)}
 \lesssim \norm{h^{1/2}(1-J_h)\psi_h}{L^2(\Gamma)}
\end{align}
for all $\psi_h\in\PP^0(\TT_h)$.
To that end, we use a seminorm argument on $\PP^0(\TT_h|_{\omega_T})$:
From $\norm{h^{1/2}(1-J_h)\psi_h}{L^2(\omega_T)}=0$, it follows that
$\psi_h$ is constant on $\omega_T$. By 
definition~\revision{\eqref{eq2:hypsing:clement}}
of $\AA_h$ this yields $\AA_h\psi_h=\psi_h$ on 
$T$. Therefore, we see $\norm{h^{1/2}(1-\AA_h)\psi_h}{L^2(T)}=0$,
and
\begin{align*}
 \norm{h^{1/2}(1-\AA_h)\psi_h}{L^2(T)}
 \lesssim \norm{h^{1/2}(1-J_h)\psi_h}{L^2(\omega_T)} 
\end{align*}
follows. A scaling argument proves that the hidden constant depends only 
on the shape of the patch $\omega_T$. Taking the $\ell_2$-sum 
of the last estimate over all elements 
$T\in\TT_h$, we obtain~\eqref{eq:hypsing:tmp}.

\next Second, we show that
\begin{align}\label{eq:hypsing:tmp2}
 \norm{h^{1/2}(1-J_h)\psi_h}{L^2(\Gamma)}
 \lesssim \norm{\psi_h-\Psi_h}{\H^{-1/2}(\Gamma)}
\end{align}
for all $\psi_h\in\PP^0(\TT_h)$ and $\Psi_h\in\SS^1(\TT_h)$.
Since the Scott-Zhang projection is stable with respect to the 
$h^{1/2}$-weighted $L^2$-norm, \revision{see Section~\ref{section:operators}}, the projection property of $J_h$ gives
\begin{align*}
 \norm{h^{1/2}(1-J_h)\psi_h}{L^2(\Gamma)}
 &= \norm{h^{1/2}(1-J_h)(\psi_h-\Psi_h)}{L^2(\Gamma)}\\
 &\lesssim \norm{h^{1/2}(\psi_h-\Psi_h)}{L^2(\Gamma)}.
\end{align*}
The inverse estimate of~\cite[Thm~3.6]{ghs}
then concludes the proof of~\eqref{eq:hypsing:tmp2}.

\next Finally, let $\P_h:H^{1/2}_0(\Gamma)\to\SS^{2,1}_0(\TT_h)$ denote the 
$H^{1/2}_0(\Gamma)$-orthogonal projection onto $\SS^{2,1}_0(\TT_h)$
with respect to the energy norm $\enorm\cdot$. 
Combining norm equivalence $\enorm\cdot\simeq\norm\cdot{\H^{1/2}(\Gamma)}$
with the estimates~\eqref{eq:hypsing:tmp} and
\eqref{eq:hypsing:tmp2} 
for $\psi_h=u_h'$ and $\Psi_h = (\P_hu_h)'$, we obtain
\begin{align*}
 &\norm{h^{1/2}(1-\AA_h)u_h'}{L^2(\Gamma)}
 \lesssim \norm{(u_h-\P_hu_h)'}{\H^{-1/2}(\Gamma)}\\
 &\quad
\lesssim \norm{(1-\P_h)u_h}{\H^{1/2}(\Gamma)}
 \simeq \enorm{(1-\P_h)u_h}.
\end{align*}
The triangle inequality and stability of $\P_h$ yield
\begin{align*}
 \enorm{(1-\P_h)u_h}
 \le \enorm{(1-\P_h)u} + \enorm{u-u_h}.
\end{align*}
Since $\P_h u$ is the best approximation~\revision{\eqref{eq:bestapproximation}}
of $u$ in $\SS^{2,1}_0(\TT_h)$ with respect to $\enorm\cdot$,
this proves~\eqref{eq:hypsing:efficient}.
\end{proof}

\subsection{Weakly-singular integral equation}
\label{section:weaksing:analysis}

\noindent
We stress that the same results hold as for the hyper-singular 
integral equation. By 
\begin{align}
 \enorm{\revision{w}}^2:=\dual{Vw}{w}{L^2(\Gamma)},
\end{align}
we now denote the Hilbert norm which is induced by the weakly-singular integral operator,
and note that 
$\enorm\cdot\simeq\norm\cdot{\H^{-1/2}(\Gamma)}$ is an equivalent
norm on $\H^{-1/2}(\Gamma)$. The reliability result reads as follows:

\begin{theorem}\label{prop:weaksing}
Let $\TT_h$ be the uniform refinement of some mesh $\TT_{2h}$, i.e.\
all elements $T\in\TT_{2h}$ are bisected into two sons $T_1,T_2\in\TT_h$ of
half length. Let $\phi_h\in\PP^0(\TT_h)$ and $\phi_{2h}\in\PP^0(\TT_{2h})$ 
be the respective Galerkin solutions. Then, it holds
\begin{align}\label{eq:weaksing:hh2}
 \enorm{\phi_h-\phi_{2h}}
 \le\c{weaksing}\,\eta_h
\end{align}
with some constant $\newc{weaksing}>0$ which depends only on
$\Gamma$ and all possible shapes of element patches~\eqref{eq:hypsing:patch}.
Under the saturation assumption
\begin{align}\label{eq:weaksing:saturation}
 \enorm{\phi-\phi_{h}} \le \Csat\,\enorm{\phi-\phi_{2h}}
\end{align}
with some uniform constant $0<\Csat<1$, there holds
\begin{align}\label{eq:weaksing:hh22}
 \Csat^{-1}\,\enorm{\phi-\phi_h}
 \le \enorm{\revision{\phi}-\phi_{2h}} 
 \le \frac{\c{weaksing}}{(1-\Csat^2)^{1/2}}\,\eta_{h}.
\end{align}
\end{theorem}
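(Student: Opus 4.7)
The plan is to closely follow the structure of the proof of Theorem~\ref{prop:hypsing}. First, I will invoke the $(h-h/2)$-type equivalence for weakly-singular integral equations from~\cite{efgp},
$$\enorm{\phi_h-\phi_{2h}} \simeq \norm{h^{1/2}(1-\Pi_{2h})\phi_h}{L^2(\Gamma)},$$
where $\Pi_{2h}:L^2(\Gamma)\to\PP^0(\TT_{2h})$ denotes the $L^2$-orthogonal projection from~\eqref{eq:L2:P0}. If only one direction is readily available in the literature, then $\enorm{\phi_h-\phi_{2h}}\lesssim\norm{h^{1/2}(1-\Pi_{2h})\phi_h}{L^2(\Gamma)}$ will follow from Galerkin orthogonality combined with the weighted inverse estimate $\norm{\cdot}{\H^{-1/2}(\Gamma)}\lesssim\norm{h^{1/2}\cdot}{L^2(\Gamma)}$ on $\PP^0(\TT_h)$ (see~\cite[Thm.~3.6]{ghs}), which is all that is needed here. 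It then remains to establish
$$\norm{h^{1/2}(1-\Pi_{2h})\phi_h}{L^2(\Gamma)} \lesssim \norm{h^{1/2}(1-\AA_h)\phi_h}{L^2(\Gamma)}=\eta_h.$$

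The main technical step will be the local estimate
$$\norm{h^{1/2}(1-\Pi_{2h})\psi_h}{L^2(T)} \lesssim \norm{h^{1/2}(1-\AA_h)\psi_h}{L^2(\omega_T)}$$
for all $T\in\TT_h$ and $\psi_h\in\PP^0(\TT_h|_{\omega_T})$, which I will verify by the seminorm argument on the finite-dimensional space $\PP^0(\TT_h|_{\omega_T})$ exactly as in the proof of Theorem~\ref{prop:hypsing}. Suppose the right-hand side vanishes, so that $\AA_h\psi_h=\psi_h$ on $\omega_T$. Let $T'\in\TT_h$ denote the sibling of $T$ in the bisection, so that $\widehat T:=T\cup T'\in\TT_{2h}$, and let $z$ be the midpoint of $\widehat T$. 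Since $\widehat T$ is a single affine line segment, $z$ cannot be a corner of $\Gamma$, hence $\AA_h$ is defined at $z$ through the continuous branch~\eqref{eq1:weaksing:clement} rather than~\eqref{eq2:weaksing:clement}. Continuity of $\AA_h\psi_h$ at $z$, combined with $\psi_h\in\PP^0(\TT_h)$ and $\AA_h\psi_h=\psi_h$ on $T\cup T'$, then forces $\psi_h|_T=\psi_h|_{T'}$. Consequently $\Pi_{2h}\psi_h=\psi_h$ on $\widehat T\supseteq T$, and the left-hand seminorm vanishes on $T$. A scaling argument will yield the claimed estimate with a constant depending only on the patch shape, and taking the $\ell_2$-sum over $T\in\TT_h$ (each element belonging to at most three patches) concludes~\eqref{eq:weaksing:hh2}.

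The reliability~\eqref{eq:weaksing:hh22} then follows by the abstract Galerkin-orthogonality argument used after~\eqref{eq:hypsing:toshow1} in the proof of Theorem~\ref{prop:hypsing}. Since $\PP^0(\TT_{2h})\subseteq\PP^0(\TT_h)$, Galerkin orthogonality yields the Pythagoras identity $\enorm{\phi-\phi_h}^2+\enorm{\phi_h-\phi_{2h}}^2=\enorm{\phi-\phi_{2h}}^2$, and combining this with the saturation assumption~\eqref{eq:weaksing:saturation} and the already established~\eqref{eq:weaksing:hh2} concludes the proof verbatim as in the hyper-singular case.

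The main obstacle will be the careful bookkeeping at corner nodes of $\Gamma$, where $\AA_h$ is allowed to jump via~\eqref{eq2:weaksing:clement}; this is what makes the weakly-singular case genuinely different from the hyper-singular one. The key observation, however, is that the bisection node between siblings $T,T'\in\TT_h$ is always interior to the parent element $\widehat T\in\TT_{2h}$ and therefore never a corner of $\Gamma$, so the jump-allowing branch of $\AA_h$ is irrelevant for the sibling pair. Corner nodes may still appear between $T$ and its non-sibling neighbour inside $\omega_T$, but these do not interfere with the constraint that identifies $\psi_h|_T$ with $\psi_h|_{T'}$, which is all that is needed to control $\Pi_{2h}\psi_h$ on $T$.
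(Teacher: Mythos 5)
Your proposal is correct and follows essentially the same route as the paper: the $(h-h/2)$-type equivalence from the literature (for the weakly-singular case the relevant reference is \cite{effp} rather than \cite{efgp}), the patchwise seminorm/null-space argument transferred from Theorem~\ref{prop:hypsing}, and the Pythagoras-plus-saturation conclusion. Your explicit observation that the bisection node between siblings lies in the interior of the parent element $\widehat T\in\TT_{2h}$ and is therefore never a corner of $\Gamma$ --- so the jump-allowing branch~\eqref{eq2:weaksing:clement} of $\AA_h$ cannot interfere with identifying $\psi_h|_T$ with $\psi_h|_{T'}$ --- is precisely the detail the paper leaves implicit when it asserts that ``the same arguments'' show~\eqref{eq:tmp} remains valid.
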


\begin{remark}
We refer to~\cite{affkp},
where the saturation assumption~\revision{\eqref{eq:weaksing:saturation}} is proved in the frame of the weakly-singular
integral \revision{equation for the Dirichlet problem~\eqref{eq:weaksing:pde}} and $\TT_{2h}$ replaced by some coarser mesh $\TT_{kh}$
with $k\ge2$ depending only on $\Gamma$.\qed
\end{remark}

\begin{proof}[Proof of Theorem~\ref{prop:weaksing}]
We adopt the notation from the proof of Theorem~\ref{prop:hypsing}.
According to~\cite{effp}, it holds that
\begin{align*}
 \enorm{\revision{\phi_h}-\phi_{2h}}\simeq \norm{h^{1/2}(1-\Pi_{2h})\phi_h}{L^2(\Gamma)},
\end{align*}
where the hidden constants depend only on $\Gamma$ and the local mesh-ratio
$\kappa(\TT_h)$ from~\eqref{eq:kappa}. Recall that the operator $\AA_h$ is
now slightly different to the case of the hyper-singular integral equation.
However, the same arguments as in the proof of Theorem~\ref{prop:hypsing}
show that~\eqref{eq:tmp} remains valid. As before the hidden constant involved 
depends on all possible shapes of element patches in $\TT_h$. This 
yields~\eqref{eq:weaksing:hh2}, and~\eqref{eq:weaksing:hh22} follows as before.
\end{proof}

We next prove the lower bound. 
As before, the following efficiency estimate~\eqref{eq:weaksing:efficient} does
not rely on the saturation assumption~\eqref{eq:weaksing:saturation},
but holds only up to some further best approximation error with higher-order elements. 

\begin{theorem}\label{prop:weaksing:efficient}
It holds
\begin{align}\label{eq:weaksing:efficient}
 \c{weaksing2}^{-1}\,\eta_h \le \enorm{\phi-\phi_h} 
 + \min_{\Phi_h\in\SS^1(\TT_h)}\,\enorm{\phi-\Phi_h}.
\end{align}
The constant $\newc{weaksing2}>0$ depends only on $\Gamma$ and all possible 
shapes of element patches~\eqref{eq:hypsing:patch}.
\end{theorem}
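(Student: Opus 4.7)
The plan is to follow the three-step structure of the efficiency proof for the hyper-singular case (Theorem~\ref{prop:hypsing:efficient}), adapted to the fact that the unknown $\phi_h\in\PP^0(\TT_h)$ is piecewise constant (rather than piecewise affine as $u_h'$ was) and that the Clément operator $\AA_h$ has been modified to allow jumps across nodes where the normal vector jumps. First I would establish the local seminorm estimate
\begin{align*}
 \norm{h^{1/2}(1-\AA_h)\psi_h}{L^2(T)}
 \lesssim \norm{h^{1/2}(\psi_h-\Psi_h)}{L^2(\omega_T)}
\end{align*}
for all $\psi_h\in\PP^0(\TT_h|_{\omega_T})$ and $\Psi_h\in\SS^1(\TT_h|_{\omega_T})$. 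By finite-dimensional equivalence of seminorms on the polynomial space $\PP^0(\TT_h|_{\omega_T})\times\SS^1(\TT_h|_{\omega_T})$, it suffices to check that $\psi_h=\Psi_h$ on $\omega_T$ implies $(1-\AA_h)\psi_h=0$ on $T$. Indeed, a piecewise constant function that coincides with a continuous piecewise affine function on the connected patch $\omega_T$ must be constant on $\omega_T$; since each nodal patch $\omega_z$ at a node $z$ of $T$ satisfies $\omega_z\subseteq\omega_T$, both cases~\eqref{eq1:weaksing:clement} and~\eqref{eq2:weaksing:clement} in the definition of $\AA_h$ reproduce this constant at each node of $T$, so $\AA_h\psi_h$ coincides with $\psi_h$ on $T$. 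A scaling argument gives a constant depending only on the shape of the patch, and $\ell_2$-summation over $T\in\TT_h$ produces the global bound.

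Second, I would combine this with the inverse estimate from~\cite[Thm~3.6]{ghs}, which yields
\begin{align*}
 \norm{h^{1/2}(\psi_h-\Psi_h)}{L^2(\Gamma)}
 \lesssim \norm{\psi_h-\Psi_h}{\H^{-1/2}(\Gamma)}
\end{align*}
for $\psi_h\in\PP^0(\TT_h)$ and $\Psi_h\in\SS^1(\TT_h)$, noting that both spaces are conforming subspaces of $L^2(\Gamma)\subseteq\H^{-1/2}(\Gamma)$.

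Third, I would let $\P_h:\H^{-1/2}(\Gamma)\to\SS^1(\TT_h)$ be the orthogonal projection with respect to the energy norm $\enorm\cdot\simeq\norm\cdot{\H^{-1/2}(\Gamma)}$. Choosing $\psi_h=\phi_h$ and $\Psi_h=\P_h\phi_h$ in the two displayed estimates and combining them yields
\begin{align*}
 \eta_h
 \lesssim \norm{\phi_h-\P_h\phi_h}{\H^{-1/2}(\Gamma)}
 \simeq \enorm{(1-\P_h)\phi_h}.
\end{align*}
Writing $(1-\P_h)\phi_h=(1-\P_h)(\phi_h-\phi)+(1-\P_h)\phi$, using stability of the orthogonal projection, and invoking the best-approximation property~\eqref{eq:bestapproximation} of $\P_h$ with respect to $\enorm\cdot$ finishes the proof, since
\begin{align*}
 \enorm{(1-\P_h)\phi} = \min_{\Phi_h\in\SS^1(\TT_h)}\enorm{\phi-\Phi_h}.
\end{align*}

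The main obstacle is the first step: the modified Clément operator is no longer a projection onto a conforming subspace of continuous splines, so one has to verify carefully that the kernel characterization underlying the seminorm argument is still valid despite the allowed jumps at corners of $\Gamma$. The key observation that saves the argument is that each nodal patch $\omega_z$ remains contained in the element patch $\omega_T$, so constancy of $\psi_h$ on $\omega_T$ always forces $\AA_h\psi_h|_T=\psi_h|_T$, regardless of which of the two definitions~\eqref{eq1:weaksing:clement}--\eqref{eq2:weaksing:clement} is active at the endpoints of $T$.
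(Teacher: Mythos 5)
Your proposal is correct, and its second and third steps coincide with the paper's. The one genuine difference is in how the first estimate $\eta_h\lesssim\norm{h^{1/2}(\phi_h-\Psi_h)}{L^2(\Gamma)}$ is obtained: the paper (following the proof of Theorem~\ref{prop:hypsing:efficient} verbatim) factors it through the Scott--Zhang projection $J_h$, first proving $\norm{h^{1/2}(1-\AA_h)\psi_h}{L^2(T)}\lesssim\norm{h^{1/2}(1-J_h)\psi_h}{L^2(\omega_T)}$ by a kernel argument on $\PP^0(\TT_h|_{\omega_T})$ alone, and then exploiting the projection property $J_h\Psi_h=\Psi_h$ together with the weighted $L^2$-stability of $J_h$ to insert an arbitrary $\Psi_h\in\SS^1(\TT_h)$. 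You instead run a single equivalence-of-seminorms argument on the product space $\PP^0(\TT_h|_{\omega_T})\times\SS^1(\TT_h|_{\omega_T})$, checking that $\psi_h=\Psi_h$ on the connected patch $\omega_T$ forces both to be constant there and hence $\AA_h\psi_h=\psi_h$ on $T$ under either case~\eqref{eq1:weaksing:clement} or~\eqref{eq2:weaksing:clement} (correctly using $\omega_z\subseteq\omega_T$ for the nodes $z$ of $T$). Your route is slightly more elementary and self-contained, since it dispenses with the Scott--Zhang operator entirely; the paper's factorization buys a reusable intermediate estimate~\eqref{eq:hypsing:tmp2} that is independent of the particular averaging operator and is shared with the hyper-singular case. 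From $\norm{h^{1/2}(\phi_h-\Psi_h)}{L^2(\Gamma)}$ onward, both arguments invoke the inverse estimate of~\cite[Thm~3.6]{ghs} and the $\enorm\cdot$-orthogonal projection $\P_h$ onto $\SS^1(\TT_h)$ with its best-approximation property in exactly the same way.
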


\begin{proof}
Arguing along the lines of the proof of Theorem~\ref{prop:hypsing:efficient},
we see that 
\begin{align*}
 \norm{h^{1/2}(1-\AA_h)\phi_h}{L^2(\Gamma)}
 \lesssim \norm{\phi_h-\Psi_h}{\H^{-1/2}(\Gamma)}
\end{align*}
for all $\Psi_h\in\SS^1(\TT_h)$.
Let $\P_h:\H^{-1/2}(\Gamma)\to\SS^1(\TT_h)$ be the orthogonal projection
onto $\SS^1(\TT_h)$ with respect to the energy norm $\enorm\cdot$.
With norm equivalence 
$\enorm\cdot\simeq\norm\cdot{\H^{-1/2}(\Gamma)}$ and the triangle 
inequality, we see for $\Psi_h=\P_h\phi_h$
\begin{align*}
 \enorm{\phi_h-\Psi_h}
 &=\enorm{(1-\P_h)\phi_h}\\
 &\le \enorm{(1-\P_h)\phi} +  \enorm{(1-\P_h)(\phi-\phi_h)} 
 \\
 &\le \enorm{(1-\P_h)\phi} +  \enorm{\phi-\phi_h}.
\end{align*}
Since $\P_h\phi$ is the best approximation of $\phi$ in $\SS^1(\TT_h)$
with respect to $\enorm\cdot$, we conclude the proof.
\end{proof}

\section{Adaptive mesh-refinement}
\label{section:convergence}

\noindent
In this section, we prove that the constants in the a~posteriori estimates
of Section~\ref{section:aposteriori} are uniformly bounded and that the
adaptive algorithms of Section~\ref{section:hypsing:algorithm} and
Section~\ref{section:weaksing:algorithm} are convergent.

\subsection{Notation}
\label{section:notation}

\noindent
For the following analysis, we slightly change the notation for the discrete
quantities. Let $\TT_0$ be the given initial partition of $\Gamma$, the 
adaptive algorithm is started with. Let $\ell=0,1,2,\dots$ denote the counter
for the adaptive loop, i.e.\ we start with $\ell=0$, and $\ell\mapsto\ell+1$ 
is increased in step~(v) of the adaptive algorithm. 

The mesh in the $\ell$-th step of the adaptive loop is denoted by $\TT_\ell$.
With $\TT_\ell$, we associate the local mesh-size $h_\ell\in L^\infty(\Gamma)$ defined in~\eqref{eq:h}. Moreover, $u_\ell\in\SS^1_0(\TT_\ell)$ resp.\ 
$\phi_\ell\in\PP^0(\TT_\ell)$ are the corresponding discrete solutions with 
respective ZZ-type error estimators $\eta_\ell$. 

\def\level{{\tt level}}
\def\T{\mathbb{T}}
Throughout, we assume that mesh-refinement is based on bisection only, i.e.\ 
refined elements are bisected into two sons of half length.
In step~(iv) of the adaptive algorithm, we ensure
\begin{align}\label{eq:level}
 \kappa(\TT_\ell) \le 2\,\kappa(\TT_0)
\end{align}
Algorithmically, this mesh-refinement is stated and analyzed in~\cite{affkp}.
In addition to~\eqref{eq:level}, the 
properties of the mesh-refinement necessary in current proofs of quasi-optimal
convergence rates for adaptive boundary element methods~\cite{fkmp,gantumur}
and adaptive finite element methods~\cite{ckns,s07,s08} are satisfied,
i.e.\ the so-called \emph{overlay estimate} and \emph{mesh-closure estimate}
are valid.
Moreover, bisection and boundedness~\eqref{eq:level} of the local mesh-ratio
guarantee that only a finite 
number of shapes of element patches~\eqref{eq:hypsing:patch} can occur. 
Therefore, the constants in
the a~posteriori analysis of Section~\ref{section:aposteriori} are 
uniformly bounded.

\subsection{Hyper-singular integral equation}
\label{section:hypsing:convergence}

\noindent
The proof of the following theorem follows the concept of
\emph{estimator reduction} proposed in~\cite{afp} for $(h-h/2)$-type
error estimators. \revision{We show} that the ZZ-type error estimator is 
contractive up to some vanishing perturbation
\begin{align}\label{eq:estconv}
 \eta_{\ell+1} \le q\,\eta_\ell + \alpha_\ell
 \text{ with }\revision{0\le\alpha_\ell\xrightarrow{\ell\to\infty}0}
\end{align}
for some $\ell$-independent constant $0<q<1$. In the current frame,
however, the proof that the perturbation $\alpha_\ell$ \revision{tends
to zero,} is much more involved than in~\cite{afp}, 
\revision{since it does not only rely on the a~priori convergence of
Lemma~\ref{lemma:apriori}, but also on a pointwise convergence property
of the averaging operator $\AA_h$.}

\begin{theorem}\label{prop:hypsing:convergence}
Let $(u_\ell)_{\ell\in\N}$ and $(\eta_\ell)_{\ell\in\N}$ be the sequences
of discrete solutions and error estimators generated by the adaptive
algorithm. Then, it holds estimator convergence
\begin{align}\label{eq:hypsing:estconv}
 \lim_{\ell\to\infty}\eta_\ell = 0.
\end{align}
Provided that $\enorm{u-u_\ell}\lesssim \eta_\ell$, 
cf.~Theorem~\ref{prop:hypsing},  we may thus conclude
$\lim\limits_{\ell\to\infty}u_\ell = u$.
\end{theorem}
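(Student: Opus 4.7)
My strategy follows the estimator-reduction framework of~\cite{afp}. The target is an inequality
\begin{align*}
\eta_{\ell+1}^2 \le q\,\eta_\ell^2 + \alpha_\ell, \qquad 0<q<1,\ \alpha_\ell\to 0,
\end{align*}
from which $\lim_{\ell\to\infty}\eta_\ell = 0$ follows by iterating and noting that the geometric damping $q^{\ell-j}$ kills the telescoped perturbation whenever $\alpha_\ell\to 0$. The reliability bound $\enorm{u-u_\ell}\lesssim\eta_\ell$ then yields $u_\ell\to u$.

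For the recursion, Young's inequality gives
\begin{align*}
\eta_{\ell+1}^2 &\le (1+\delta)\,\norm{h_{\ell+1}^{1/2}(1-\AA_{\ell+1})u_\ell'}{L^2(\Gamma)}^2\\
&\quad{}+ C_\delta\,\norm{h_{\ell+1}^{1/2}(1-\AA_{\ell+1})(u_{\ell+1}-u_\ell)'}{L^2(\Gamma)}^2.
\end{align*}
The second summand is estimated by $L^2$-stability of $\AA_{\ell+1}$ (Section~\ref{section:operators}) combined with the inverse estimate~\cite[Thm.\,3.6]{ghs}, producing a bound $\lesssim\enorm{u_{\ell+1}-u_\ell}^2$. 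This tends to zero by \emph{a~priori convergence}: bisection yields nested spaces $\SS^1_0(\TT_\ell)\subseteq\SS^1_0(\TT_{\ell+1})$, so the Galerkin solutions form a Cauchy sequence in the energy norm with some limit $u_\infty$.

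For the first summand, I partition $\TT_{\ell+1}$ into sons of elements in $\MM_\ell$, where $h_{\ell+1}\le h_\ell/2$, and elements inherited unchanged from $\TT_\ell$. A scaling and seminorm argument analogous to the proof of Theorem~\ref{prop:hypsing} replaces $\AA_{\ell+1}$ by $\AA_\ell$ up to a defect supported in a neighbourhood of $\MM_\ell$. Combined with the D\"orfler property~\eqref{eq:doerfler}, the factor $\tfrac12$ on refined elements absorbs a definite fraction of $\eta_\ell^2$, giving
\begin{align*}
\norm{h_{\ell+1}^{1/2}(1-\AA_{\ell+1})u_\ell'}{L^2(\Gamma)}^2 \le (1-q_\theta)\,\eta_\ell^2 + \beta_\ell
\end{align*}
for some $q_\theta\in(0,1)$ depending only on $\theta$ and the admissible patch shapes.

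The main obstacle is showing $\beta_\ell\to0$. In contrast to the $(h-h/2)$ setting of~\cite{afp}, $\beta_\ell$ encodes the mismatch of the non-local operators $\AA_\ell$ and $\AA_{\ell+1}$ acting on the $\TT_\ell$-piecewise constant $u_\ell'$, and it cannot be absorbed into $\enorm{u_{\ell+1}-u_\ell}$ alone. The required remedy is a pointwise convergence property of the Cl\'ement operator along refinement sequences. Decompose $\Gamma = \Gamma_0\cup\Gamma_\infty$, where $\Gamma_0 := \set{x\in\Gamma}{\lim_\ell h_\ell(x)=0}$. On $\Gamma_\infty$ the meshes stabilise locally and the node patches freeze, so $\AA_\ell$ and $\AA_{\ell+1}$ agree there for $\ell$ large; on $\Gamma_0$ the weight $h_\ell^{1/2}$ vanishes in $L^\infty$, and together with uniform $L^2$-stability of $\AA_\ell$ and the energy convergence $u_\ell\to u_\infty$ this forces the $\Gamma_0$-contribution of $\beta_\ell$ to zero. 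Executing this decomposition so that it meshes cleanly with the local mesh-ratio bound~\eqref{eq:level} and the fact that $u_\infty$ need only lie in $H^{1/2}_0(\Gamma)$ (so $u_\infty'$ is merely a distribution) is the technical heart of the proof.
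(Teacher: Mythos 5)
Your overall strategy coincides with the paper's: an estimator reduction $\eta_{\ell+1}\le q\,\eta_\ell+\alpha_\ell$ with the contraction coming from bisection plus D\"orfler marking, the Galerkin increment controlled by stability of the averaging operator, the inverse estimate of~\cite[Thm.~3.6]{ghs}, and a~priori convergence of the nested Galerkin solutions. Your second summand is handled exactly as in the paper, and your ``defect'' $\beta_\ell$ is precisely the paper's third term $\norm{h_{\ell+1}^{1/2}(\AA_{\ell+1}-\AA_\ell)u_\ell'}{L^2(\Gamma)}$; you correctly identify that proving this tends to zero is the crux.

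However, the argument you sketch for $\beta_\ell\to0$ contains a step that would fail as stated. The splitting $\Gamma=\Gamma_0\cup\Gamma_\infty$ with $\Gamma_0=\set{x\in\Gamma}{h_\ell(x)\to0}$ is $\ell$-independent and gives only \emph{pointwise} information: neither $\norm{h_\ell}{L^\infty(\Gamma_0)}\to0$ nor ``the patches on $\Gamma_\infty$ are frozen for all $\ell\ge\ell_0$'' holds uniformly, since at any fixed step $\ell$ there may remain large, not-yet-refined elements inside $\Gamma_0$ and not-yet-stabilized patches inside $\Gamma_\infty$. The paper resolves this with the $\ell$-dependent three-set decomposition of Morin--Siebert--Veeser, $\Gamma=\Gamma_\ell^0\cup\Gamma_\ell\cup\Gamma_\ell^*$: on $\Gamma_\ell^0$ the relevant patches already belong to all later meshes so the operators agree there, on $\Gamma_\ell$ one has $\norm{h_\ell}{L^\infty(\Gamma_\ell)}\to0$, and the buffer satisfies $|\Gamma_\ell^*|\to0$ so that its contribution is killed by non-concentration of $L^2$-functions; this yields the pointwise operator convergence $\norm{(\AA_{\ell+1}-\AA_\ell)v}{L^2(\Gamma)}\to0$ for each \emph{fixed} $v\in L^2(\Gamma)$ (Proposition~\ref{lemma:scottzhang}). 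Second, the difficulty you flag about $u_\infty'$ being only a distribution never needs to be confronted: the paper fixes $k_0$ with $\norm{u_\ell-u_{k_0}}{\H^{1/2}(\Gamma)}\le\eps$ for all $\ell\ge k_0$, applies the operator convergence to the fixed discrete function $u_{k_0}'\in\PP^0(\TT_{k_0})\subset L^2(\Gamma)$, and controls $(\AA_{\ell+1}-\AA_\ell)(u_\ell-u_{k_0})'$ by $L^2$-stability together with the inverse estimate, so that the limit $u_\infty$ is never differentiated. Without these two devices your claim $\beta_\ell\to0$ is not established, and this is the part of the proof that carries the real work.
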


The proof requires the following lemmas. The first is already found
in the early work~\cite{bv84} and will be applied for 
$H=H^{1/2}_0(\Gamma)$ and $X_\ell = \SS^1_0(\TT_\ell)$ for the
hyper-singular integral equation as well as for
$H=\H^{-1/2}(\Gamma)$ and $X_\ell = \PP^0(\TT_\ell)$ for the
weakly-singular integral equation.

\begin{lemma}[A~priori convergence of Galerkin solutions]
\label{lemma:apriori}
Suppose that $H$ is a Hilbert space and $(X_\ell)_{\ell\in\N}$ is a sequence
of discrete subspaces with $X_\ell\subseteq X_{\ell+1}$. For $u\in H$ and 
$\ell\in\N$, let $u_\ell\in X_\ell$ be the best approximation of $u$. Then,
there exists a limit $u_\infty\in H$ such that $\lim\limits_{\ell\to\infty}
\norm{u_\infty-u_\ell}X=0$.\qed
\end{lemma}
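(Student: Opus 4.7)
The plan is to exploit that $u_\ell$ is the orthogonal projection of $u$ onto $X_\ell$ and use nestedness together with Pythagoras to show that $(u_\ell)_{\ell\in\N}$ is Cauchy in $H$; completeness of $H$ then gives the claimed limit $u_\infty$.

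First I would record the two easy monotonicity observations. Since $X_\ell\subseteq X_{\ell+1}$, the best-approximation characterization~\eqref{eq:bestapproximation} yields $\norm{u-u_{\ell+1}}H\le\norm{u-u_\ell}H$, so the nonnegative sequence $(\norm{u-u_\ell}H)_{\ell\in\N}$ is decreasing and therefore convergent to some limit $d\ge 0$. Dually, by Pythagoras $\norm{u_\ell}H^2 = \norm{u}H^2 - \norm{u-u_\ell}H^2$ is increasing and bounded by $\norm{u}H^2$.

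The key step is the following Pythagoras identity for $m\ge\ell$. Since $X_\ell\subseteq X_m$, the difference $u_m-u_\ell$ lies in $X_m$, while the orthogonality relation~\eqref{eq:orthproj} for the projection onto $X_m$ gives $u-u_m\perp X_m$. Applying Pythagoras to the decomposition $u-u_\ell=(u-u_m)+(u_m-u_\ell)$ yields
\begin{align*}
\norm{u-u_\ell}H^2 = \norm{u-u_m}H^2 + \norm{u_m-u_\ell}H^2,
\end{align*}
and hence
\begin{align*}
\norm{u_m-u_\ell}H^2 = \norm{u-u_\ell}H^2 - \norm{u-u_m}H^2 \xrightarrow{\ell,m\to\infty} d-d=0.
\end{align*}
Thus $(u_\ell)$ is a Cauchy sequence in the Hilbert space $H$, and by completeness it converges to some $u_\infty\in H$, which is the desired conclusion.

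There is no real obstacle here; the argument is entirely standard and uses only the Hilbert space structure plus nestedness of the subspaces. I note that one does not need $u$ to lie in $\overline{\bigcup_\ell X_\ell}$: the lemma asserts convergence of $u_\ell$ to \emph{some} $u_\infty\in H$, not that $u_\infty=u$. (Of course, one can identify $u_\infty$ as the orthogonal projection of $u$ onto $\overline{\bigcup_\ell X_\ell}$, but this is not needed for the statement.)
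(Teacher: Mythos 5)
Your argument is correct and is precisely the standard one: the paper itself gives no proof of this lemma but simply cites the early work of Babu\v{s}ka and Vogelius \cite{bv84}, where exactly this nestedness--Pythagoras--Cauchy reasoning is used. The only cosmetic slip is that $\norm{u-u_\ell}{H}^2-\norm{u-u_m}{H}^2$ tends to $d^2-d^2$ rather than $d-d$, which of course does not affect the conclusion; your closing remark that $u_\infty$ need not equal $u$ (it is the projection onto $\overline{\bigcup_\ell X_\ell}$) is also exactly the right reading of the statement.
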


The following lemma recalls local $L^2$-stability and first-order approximation
property of the averaging operator $\AA_\ell$ used.

\begin{lemma}
Let $T\in\TT_\ell$. Then, the operators 
$\AA_\ell:L^2(\Gamma)\to L^2(\Gamma)$ defined 
in~\eqref{eq2:hypsing:clement} resp.~\eqref{eq2:weaksing:clement} are 
locally $L^2$-stable
\begin{align}\label{eq:A:L2}
 \norm{\AA_\ell v}{L^2(T)}
 \le \c{A:L2}\,\norm{v}{L^2(\omega_T)},
\end{align}
for all $v\in L^2(\Gamma)$,
are local $H^1$-stable
\begin{align}\label{eq:A:H1:stab}
 \norm{(\AA_\ell v)'}{L^2(T)}
 \le \c{A:L2}\,\norm{v}{H^1(\omega_T)},
\end{align}
for all $v\in H^1(\Gamma)$,
and have a local first-order approximation property
\begin{align}\label{eq:A:H1}
 \norm{(1-\AA_\ell)v}{L^2(T)}
 \le \c{A:L2}\,\norm{h_\ell v'}{L^2(\omega_T)},
\end{align}
for all $v\in H^1(\Gamma)$. Here, $\omega_T$ denotes the element
patch~\eqref{eq:hypsing:patch} of $T\in\TT_\ell$, and $\newc{A:L2}>0$ 
depends only on $\Gamma$ and the mesh-refinement chosen.
\end{lemma}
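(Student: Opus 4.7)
The plan is to prove all three inequalities by elementwise arguments that exploit two structural features of $\AA_\ell$: on each $T\in\TT_\ell$ the function $\AA_\ell v$ is affine and determined by (at most) two nodal values, each of them an average of $v$ over a subset of the patch $\omega_T$ of length comparable to $\length(T)$; and $\AA_\ell$ reproduces constants, since the integral mean of a constant equals that constant. By the bounded mesh-ratio~\eqref{eq:level}, $\length(T)\simeq\length(\omega_T)$ and only finitely many patch shapes arise up to scaling, so all constants from elementary scaling arguments are uniform in $\ell$. The modification~\eqref{eq2:weaksing:clement} at nodes $z$ where the normal vector jumps does not affect any of the arguments, because the nodal value is still the integral mean of $v$ over a subset of $\omega_T$ of length comparable to $\length(T)$.

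For the $L^2$-stability~\eqref{eq:A:L2}, I would apply Cauchy--Schwarz to each nodal value, obtaining $|(\AA_\ell v)(z)|^2\le\length(\omega_z)^{-1}\,\norm{v}{L^2(\omega_z)}^2\lesssim\length(T)^{-1}\,\norm{v}{L^2(\omega_T)}^2$, and then integrating the pointwise bound $|(\AA_\ell v)(x)|^2\lesssim\max_{z}|(\AA_\ell v)(z)|^2$ over $T$. For the remaining two estimates, constant reproduction lets us replace $v$ by $v-\bar v$ for any $\bar v\in\R$, and I would take $\bar v$ to be the integral mean of $v$ over $\omega_T$. Parametrising the connected patch $\omega_T$ by arclength as a single interval, the one-dimensional Poincar\'e inequality yields $\norm{v-\bar v}{L^2(\omega_T)}\lesssim\length(\omega_T)\,\norm{v'}{L^2(\omega_T)}$. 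The approximation bound~\eqref{eq:A:H1} then follows from the triangle inequality combined with~\eqref{eq:A:L2} applied to $v-\bar v$. For the $H^1$-stability~\eqref{eq:A:H1:stab}, I would use that $(\AA_\ell v)'|_T$ equals the finite difference of the two nodal values of $\AA_\ell(v-\bar v)$ divided by $\length(T)$; a nodewise Cauchy--Schwarz combined with the Poincar\'e estimate above gives $\norm{(\AA_\ell v)'}{L^2(T)}^2\lesssim\length(T)^{-1}\length(\omega_T)^{-1}\,\norm{v-\bar v}{L^2(\omega_T)}^2\lesssim\norm{v'}{L^2(\omega_T)}^2\le\norm{v}{H^1(\omega_T)}^2$.

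I expect no substantial obstacle, only one small point of care: the application of the Poincar\'e inequality on $\omega_T$ when $\omega_T$ contains a node where $\Gamma$ is not smooth, in particular a normal-jump node in the weakly-singular variant. Since $\omega_T$ is the finite union of at most three consecutive line segments, its arclength parametrisation is a single interval of length $\length(\omega_T)$, and the one-dimensional Poincar\'e inequality applies with the standard constant. Uniformity of all constants in $\ell$ is then a direct consequence of~\eqref{eq:level} together with the finite list of patch shapes which can arise under the chosen bisection-based mesh-refinement.
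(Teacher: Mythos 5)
Your proof is correct and follows essentially the same route as the paper, whose proof consists only of the remark that the estimates ``follow as for usual Cl\'ement-type operators'' (citing Scott--Zhang and Brenner--Scott) together with scaling and the observation that only finitely many patch shapes occur. You have simply executed that standard argument explicitly --- nodewise Cauchy--Schwarz for $L^2$-stability, constant reproduction plus the one-dimensional Poincar\'e inequality on the arclength-parametrised patch for the approximation property and the $H^1$-stability, and the bounded mesh-ratio~\eqref{eq:level} for uniformity of the constants --- and you correctly note that the jump modification~\eqref{eq2:weaksing:clement} is harmless since all estimates are elementwise.
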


\begin{proof}
The proof follows as for usual Cl\'ement-type operators in finite element
analysis, cf e.g.~\cite{bs,scottzhang}. Scaling arguments prove that the 
constants involved depend only on the shape of the element patch $\omega_T$. 
The mesh-refinement chosen guarantees that only finitely many pat\-ches occur
so that these constants depend, in fact, only on the boundary $\Gamma$ and the 
mesh-refinement strategy.
\end{proof}

The following proposition is more general than required for the proof of
Theorem~\ref{prop:hypsing:convergence}. However, it might be of general
interest and might have further applications, since it also applies to FEM
and higher dimensions even with the same proof.

\begin{proposition}[A~priori convergence of averaging operators]
\label{lemma:scottzhang}
Given the sequence $(\TT_\ell)_{\ell\in\N}$ of adaptively generated meshes,
let $\AA_\ell:L^2(\Gamma)\to H^1(\Gamma)$ be a linear operator which 
satisfies~\eqref{eq:A:L2}--\eqref{eq:A:H1}. Assume that, for all elements
$T\in\TT_\ell$
and all functions $v\in L^2(\Gamma)$, $(\AA_\ell v)|_T$ depends only on 
the function values $v|_{\omega_T}$ on the element patch~\eqref{eq:hypsing:patch}. Then, there a exists a limit
operator $\AA_\infty:L^2(\Gamma)\to L^2(\Gamma)$ which satisfies the 
following:
\begin{itemize}
\item[\rm(i)] For all $0\le s\le1$, $\AA_\infty:H^s(\Gamma)\to H^s(\Gamma)$ is a
well-defined linear and continuous operator.
\item[\rm(ii)] For all $0\le s<1$, $\AA_\infty$ is the pointwise limit of 
$\AA_\ell$, i.e., for all $v\in H^s(\Gamma)$ it holds
\begin{align}
 \lim_{\ell\to\infty}\norm{(\AA_\infty-\AA_\ell)v}{H^s(\Gamma)}=0.
\end{align}
\item[\rm(iii)] For all $v\in H^1(\Gamma)$, $\AA_\ell v$ converges weakly in 
$H^1(\Gamma)$ towards $\AA_\infty v$ as $\ell\to\infty$.
\end{itemize}
\end{proposition}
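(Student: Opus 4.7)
The plan is threefold: (1) obtain uniform Sobolev bounds for $\AA_\ell$, (2) construct a pointwise limit exploiting the adaptive bisection structure, and (3) upgrade to the various convergence statements by compactness and density. For~(1), I would sum the local estimates \eqref{eq:A:L2}--\eqref{eq:A:H1} in $\ell^2$ and use that uniform boundedness of the local mesh-ratio \eqref{eq:level} caps the patch overlap; this yields $\AA_\ell\in L(L^2(\Gamma);L^2(\Gamma))$ and $\AA_\ell\in L(H^1(\Gamma);H^1(\Gamma))$ with operator norms bounded independently of $\ell$. The interpolation estimate \eqref{eq:interp:operator} then gives uniform boundedness in $L(H^s(\Gamma);H^s(\Gamma))$ for $0\le s\le 1$, a bound which is inherited by $\AA_\infty$ and proves~(i).

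For~(2), since bisection only shrinks elements, $(h_\ell)$ is pointwise non-increasing, so $h_\infty(x):=\lim_\ell h_\ell(x)$ exists a.e.\ on $\Gamma$. Decompose $\Gamma=\Gamma_\infty\cup\Gamma_0$ with $\Gamma_\infty:=\{h_\infty>0\}$, where the mesh eventually stabilizes, and $\Gamma_0:=\{h_\infty=0\}$, where it refines to zero. For $v\in H^1(\Gamma)$, the first-order approximation property \eqref{eq:A:H1} combined with $h_\ell\le\|h_0\|_{L^\infty}$ and dominated convergence yields $\AA_\ell v\to v$ in $L^2(\Gamma_0)$. On $\Gamma_\infty$, the skeleton of elements stabilizes from some level onward, but patches $\omega_z$ of stable nodes sitting at the interface to $\Gamma_0$ may keep losing mass as their neighbours are bisected; splitting each such patch into a stable part and a shrinking part whose length tends to zero, absolute continuity of $\int v$ for $v\in L^1(\Gamma)$ forces convergence of the nodal averages and defines $\AA_\infty v|_{\Gamma_\infty}$ as a piecewise affine function.

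For~(3), uniform $H^1$-boundedness together with reflexivity and the compact embedding $H^1(\Gamma)\hookrightarrow L^2(\Gamma)$ provide, for every $v\in H^1(\Gamma)$, a weakly $H^1$-convergent subsequence whose strong $L^2$-limit must coincide with the pointwise limit of~(2); since this identification is independent of the subsequence, the full sequence converges weakly in $H^1$, yielding~(iii), and strongly in $L^2$, which is the $s=0$ case of~(ii) restricted to $H^1$-data. Extension to $v\in L^2(\Gamma)$ follows from a $3\varepsilon$-argument using uniform $L^2$-stability and density of $H^1(\Gamma)$ in $L^2(\Gamma)$: for $w\in H^1(\Gamma)$ with $\|v-w\|_{L^2(\Gamma)}\le\varepsilon$ one has
\begin{align*}
\|(\AA_\ell-\AA_m)v\|_{L^2(\Gamma)}\le 2C\,\varepsilon+\|(\AA_\ell-\AA_m)w\|_{L^2(\Gamma)},
\end{align*}
so that $(\AA_\ell v)$ is Cauchy in $L^2(\Gamma)$. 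For $0<s<1$, the interpolation bound
\begin{align*}
\|(\AA_\ell-\AA_\infty)v\|_{H^s(\Gamma)}\le\|(\AA_\ell-\AA_\infty)v\|_{L^2(\Gamma)}^{1-s}\,\|(\AA_\ell-\AA_\infty)v\|_{H^1(\Gamma)}^{s}
\end{align*}
combined with uniform $H^1$-boundedness, the $L^2$-convergence just established, and a further density step in $H^s(\Gamma)$ completes~(ii).

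The main obstacle will be step~(2) on $\Gamma_\infty$: even on elements that are never refined, the patches of stable nodes straddling the interface to $\Gamma_0$ keep changing under bisection of their neighbours, so it is not automatic that the nodal averages $(\AA_\ell v)(z)$ stabilize in $\R$. The patch-splitting combined with absolute continuity of the Lebesgue integral of $v\in L^1(\Gamma)$ is the key new ingredient beyond the a~priori convergence of Lemma~\ref{lemma:apriori} and is what makes the pointwise identification of $\AA_\infty$ possible.
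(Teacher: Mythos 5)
Your overall architecture (uniform stability, identification of an $L^2$-limit, Rellich plus a subsequence argument for weak $H^1$-convergence, interpolation and density for $H^s$) matches the paper's proof in its first and third stages; the genuine difference lies in how the $L^2$-limit is constructed. The paper does not work with your two-set splitting $\{h_\infty=0\}\cup\{h_\infty>0\}$: it imports from \cite{msv} a three-set decomposition of $\Gamma$ into a stable part $\Gamma_\ell^0$ (elements whose whole patch is never refined again, where $\AA_\ell v$ is frozen by the locality assumption), a part $\Gamma_\ell$ whose patches are eventually uniformly refined (where $\|h_\ell\|_{L^\infty}\to0$ and \eqref{eq:A:H1} applies after an $\eps$-approximation of $v$ in $H^1$), and a remainder $\Gamma_\ell^*$ with $|\Gamma_\ell^*|\to0$, which is absorbed by the local $L^2$-stability \eqref{eq:A:L2} and non-concentration of the Lebesgue integral. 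This yields a Cauchy sequence in $L^2(\Gamma)$ without ever identifying the limit function explicitly.

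The remainder set $\Gamma_\ell^*$ is precisely the ``interface'' you single out as the main obstacle, and this is where your argument has a gap. First, your patch-splitting plus absolute-continuity fix evaluates the nodal values of $\AA_\ell v$ as explicit integral means; but the proposition is stated for an \emph{abstract} operator satisfying only \eqref{eq:A:L2}--\eqref{eq:A:H1} and the locality assumption, so you may not use the concrete formula \eqref{eq2:hypsing:clement}, and locality alone does not force $(\AA_\ell v)|_T$ to stabilize on a never-refined $T$ whose patch keeps shrinking. Second, your dominated-convergence step on $\Gamma_0$ needs $\chi_{\omega_\ell(\Gamma_0)}h_\ell^2\to0$ a.e., i.e.\ that patches of shrinking elements do not permanently protrude into the stable region; this is true, but only because the bounded mesh-ratio \eqref{eq:level} forbids a stable element from having an infinitely refined neighbour --- an observation which, once invoked, also makes your interface case vacuous and renders the patch-splitting unnecessary. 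So either make \eqref{eq:level} an explicit ingredient and discard the absolute-continuity argument, or replace your decomposition by the one from \cite{msv}, whose measure estimate $|\Gamma_\ell^*|\to0$ is exactly the tool that closes this gap in the abstract setting.
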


\begin{proof}
For the proof, let $\omega_\ell(\gamma):=\bigcup\set{T\in\TT_\ell}{T\cap\overline\gamma\neq\emptyset}$ denote the patch of subsets $\gamma\subseteq\Gamma$ with respect to $\TT_\ell$. We follow the ideas from~\cite{msv} and define the following subsets
of $\Gamma$:
\mf{\begin{align*}
 \Gamma_\ell^0 &:= \bigcup\set{T\in\TT_\ell}{\omega_\ell(T)\subseteq\bigcup\big(\bigcap_{j=\ell}^\infty\TT_j\big)},\\
 \Gamma_\ell &:= \bigcup\big\{T\in\TT_\ell\,:\,\text{Exists }k\ge0\text{ s.t.\ }
 \omega_T \text{ is at least }
 \\&\qquad\qquad\qquad\qquad
 \text{uniformly refined in }\TT_{\ell+k}\big\},\\
 \Gamma_\ell^* &:= \Gamma \backslash (\Gamma_\ell\cup\Gamma_\ell^0).
\end{align*}}
According to~\cite[Corollary~4.1]{msv}, it holds that 
\begin{align}\label{eq:conv:h}
 \norm{h_\ell}{L^\infty(\omega_\ell(\Gamma_\ell))}
 \simeq \norm{h_\ell}{L^\infty(\Gamma_\ell)} \xrightarrow{\ell\to\infty}0.
\end{align}
Let $v\in L^2(\Gamma)$ and $\eps>0$ be arbitrary. Since $H^1(\Omega)$ is dense in $L^2(\Gamma)$,
we find $v_\eps\in H^1(\Gamma)$ such that $\norm{v-v_\eps}{L^2(\Gamma)}\le\eps$.
Due to the local $L^2$-stability~\eqref{eq:A:L2} and the approximation property~\eqref{eq:A:H1} of $\AA_\ell$, 
we obtain
\begin{align*}
 \norm{(1-\AA_\ell)v}{L^2(\Gamma_\ell)}
 &\lesssim \norm{(1-\AA_\ell)v_\eps}{L^2(\Gamma_\ell)}
 + \eps
 \\&\lesssim \norm{h_\ell\nabla v_\eps}{L^2(\omega_\ell(\Gamma_\ell))}+\eps.
\end{align*}
According to~\eqref{eq:conv:h}, we find $\ell_0\in\N$ such that
$$\norm{h_\ell\nabla v_\eps}{L^2(\omega_\ell(\Gamma_\ell))}\le
\norm{h_\ell}{L^\infty(\omega_\ell(\Gamma_\ell))}
\norm{\nabla v_\eps}{L^2(\Gamma)}\le \eps$$ for all $\ell\ge\ell_0$.
This proves
\begin{align}\label{eq:conv:ell}
 \norm{(1-\AA_\ell)v}{L^2(\Gamma_\ell)} \lesssim \eps
 \quad\text{for }\ell\ge\ell_0.
\end{align}
\cite[Proposition~4.2]{msv} states $|\Gamma_\ell^*|\to0$ as $\ell\to\infty$.
Due to the non-concentration of Lebesgue functions, this yields
\begin{align}\label{eq:dp1}
 \norm{v}{L^2(\omega_\ell(\Gamma_\ell^\star))}
 \le \eps
 \quad\text{for some }\ell_1\in\N\text{ and all }\ell\ge\ell_1.
\end{align}
Let $\ell\ge\max\{\ell_0,\ell_1\}$ and $k\ge0$. For $T\in\TT_\ell$, 
the definition of $(\AA_\ell v)|_T$ depends only on $v|_{\omega_\ell(T)}$. By definition of $\Gamma_\ell^0$, 
we obtain
\begin{align*}
 \norm{(\AA_\ell-\AA_{\ell+k})v}{L^2(\Gamma_\ell^0)}=0.
\end{align*}
With local $L^2$-stability~\eqref{eq:A:L2} and~\eqref{eq:dp1}, we see
\begin{align*}
 \norm{(\AA_\ell - \AA_{\ell+k})v}{L^2(\Gamma_\ell^*)}
 &\lesssim \norm{v}{L^2(\omega_\ell(\Gamma_\ell^*))} + 
 \norm{v}{L^2(\omega_{\ell+k}(\Gamma_\ell^*))} 
 \\&\le 2\,\norm{v}{L^2(\omega_\ell(\Gamma_\ell^*))}
 \lesssim\eps.
\end{align*}
Moreover,~\eqref{eq:conv:ell} and a triangle inequality prove
\begin{align*}
 \norm{(\AA_\ell-\AA_{\ell+k})v}{L^2(\Gamma_\ell)}
 \lesssim \eps.
\end{align*}
The combination of the last three estimates yields
\begin{align*}
 \norm{(\AA_\ell-\AA_{\ell+k})v}{L^2(\Gamma)} \lesssim \eps.
\end{align*}
Altogether, $(\AA_\ell v)_\ell$ is thus a Cauchy sequence in $L^2(\Gamma)$ and
hence convergent to some limit 
$\AA_\infty v:=\lim_\ell \AA_\ell v \in L^2(\Gamma)$.
Elementary calculus predicts that this provides a well-defined linear operator
$\AA_\infty:L^2(\Gamma)\to L^2(\Gamma)$, and the Banach-Steinhaus theorem even
predicts continuity $\AA_\infty\in L(L^2(\Gamma); L^2(\Gamma))$.

Second, the $H^1$-stability~\eqref{eq:A:H1:stab} yields that $\AA_\ell\in L^2(H^1(\Gamma);\linebreak H^1(\Gamma))$ are uniformly continuous operators. For 
$v\in H^1(\Gamma)$, the sequence $(\AA_\ell v)_\ell$ is hence bounded in $H^1(\Gamma)$ and thus
admits a weakly convergent subsequence $\AA_{\ell_k}v\rightharpoonup w$ weakly
in $H^1(\Gamma)$ as $k\to\infty$. The Rellich compactness theorem yields $\AA_{\ell_k}v\to w$ strongly in $L^2(\Omega)$. Uniqueness of limits therefore reveals
$\AA_\infty v = w \in H^1(\Gamma)$. Iterating this argument, we see that each 
subsequence of $\AA_\ell v$ admits a further subsequence such that $\AA_{\ell_{k_j}}v$ converges to $\AA_\infty v\in H^1(\Gamma)$ weakly in 
$H^1(\Gamma)$. By elementary calculus, this implies weak convergence 
$\AA_\ell v\rightharpoonup \AA_\infty v$ in $H^1(\Gamma)$ for the entire 
sequence. Again, the Banach-Steinhaus theorem applies and proves that
$\AA_\infty \in L(H^1(\Gamma);H^1(\Gamma))$.

Third, the remaining claims follow from interpolation. The interpolation
estimate~\eqref{eq:interp:operator} implies that the operator 
$\AA_\infty \in L(H^s(\Gamma);H^s(\Gamma))$ is 
well-defined, linear, and continuous. Moreover, the 
estimate~\eqref{eq:interp:norm} of the interpolation norm and boundedness
of weakly convergent sequences yields
\begin{align*}
 &\norm{(\AA_\infty-\AA_\ell)v}{H^s(\Gamma)}\\
 &\quad\le \norm{(\AA_\infty-\AA_\ell)v}{L^2(\Gamma)}^{1-s}
 \norm{(\AA_\infty-\AA_\ell)v}{H^1(\Gamma)}^s
 \xrightarrow{\ell\to0}0
\end{align*}
for all $0<s<1$ and $v\in H^1(\Gamma)$. By density of $H^1(\Gamma)$ 
in $H^s(\Gamma)$ and stability of $\AA_\ell$, this results in
pointwise convergence $\norm{(\AA_\infty-\AA_\ell)v}{H^s(\Gamma)}\to0$ for all $v\in H^s(\Gamma)$.
\end{proof}

\begin{proof}[Proof of Theorem~\ref{prop:hypsing:convergence}]
The triangle inequality shows
\begin{align}\label{eq1:hypsing:estconv}
\begin{split}
 \revision{\eta_{\ell+1}}
 \le& \norm{h_{\ell+1}^{1/2}(1-\AA_\ell)u_\ell'}{L^2(\Gamma)}
 \\&+ \norm{h_{\ell+1}^{1/2}(1-\AA_{\ell+1})(u_{\ell+1}-u_\ell)'}{L^2(\Gamma)}
 \\&+ \norm{h_{\ell+1}^{1/2}(\AA_{\ell+1}-\AA_\ell)u_\ell'}{L^2(\Gamma)}.
\end{split}
\end{align}
\next For the first term, we argue analogously to~\cite{afp}: According to bisection,
we have $h_{\ell+1}|_T = \frac12\,h_\ell|_T$ for refined elements
$T\in\TT_\ell\backslash\TT_{\ell+1}$. This gives
\begin{align*}
 &\norm{h_{\ell+1}^{1/2}(1-\AA_\ell)u_\ell'}{L^2(\Gamma)}^2
 \\&\qquad
 \le \sum_{T\in\TT_\ell\cap\TT_{\ell+1}}\eta_\ell(T)^2
 + \frac12\sum_{T\in\TT_\ell\backslash\TT_{\ell+1}}\eta_\ell(T)^2
 \\&\qquad
 = \eta_\ell^2 - \frac12\sum_{T\in\TT_\ell\backslash\TT_{\ell+1}}\eta_\ell(T)^2.
\end{align*}
Since at least all marked elements are refined, 
the D\"orfler marking strategy~\eqref{eq:doerfler} in step~(iii)
of the adaptive algorithm yields
\begin{align*}
 \sum_{T\in\TT_\ell\backslash\TT_{\ell+1}}\eta_\ell(T)^2
 \ge \sum_{T\in\MM_\ell}\eta_\ell(T)^2
 \ge \theta\,\eta_\ell^2.
\end{align*}
Combining the last two estimates, we see
\begin{align}\label{eq:hypsing:step1}
 \norm{h_{\ell+1}^{1/2}(1-\AA_\ell)u_\ell'}{L^2(\Gamma)}
 \le (1-\theta/2)^{1/2}\,\eta_\ell.
\end{align}
\next Next, we consider the second term in~\eqref{eq1:hypsing:estconv}. 
The local $H^1$-stability~\eqref{eq:A:L2} yields
\begin{align*}
 \norm{h_{\ell+1}^{1/2}(1\!-\!\AA_{\ell+1})(u_{\ell+1}\!-\!u_\ell)'}{L^2(\Gamma)}
 \lesssim \norm{h_{\ell+1}^{1/2}(u_{\ell+1}\!-\!u_\ell)'}{L^2(\Gamma)}.
\end{align*}
The inverse estimate of~\cite[Thm.~3.6]{ghs} gives
\begin{align*}
 &\norm{h_{\ell+1}^{1/2}(u_{\ell+1}-u_\ell)'}{L^2(\Gamma)}
 \lesssim \norm{(u_{\ell+1}-u_\ell)'}{\H^{-1/2}(\Gamma)}
 \\&\qquad
 \lesssim \norm{u_{\ell+1}-u_\ell}{\H^{1/2}(\Gamma)}
 \simeq\enorm{u_{\ell+1}-u_\ell}.
\end{align*}
Together with the a~priori convergence of Lemma~\ref{lemma:apriori},
we thus see
\begin{align}\label{eq:hypsing:step2}
 \norm{(1-\AA_{\ell+1})(u_{\ell+1}-u_\ell)'}{L^2(\Gamma)}
 \xrightarrow{\ell\to\infty}0.
\end{align}
\next Third, we consider the last term in~\eqref{eq1:hypsing:estconv}: 
Let $\eps>0$.
According to the a~priori convergence of Lemma~\ref{lemma:apriori},
there exists an index $k_0\in\N$ such that
\begin{align*}
 \norm{u_\ell-u_k}{\H^{1/2}(\Gamma)}\le \eps
 \quad\text{for all }k,\ell\ge k_0.
\end{align*}
According to the pointwise a~priori convergence of $\AA_\ell$
from Lemma~\ref{lemma:scottzhang}, there
exists an index $\ell_0\in\N$ such that
\begin{align*}
 \norm{(\AA_{\ell+1}-\AA_\ell)u_{k_0}'}{L^2(\Gamma)}\le \eps
 \quad\text{for all }\ell\ge\ell_0.
\end{align*}
Moreover, the local $L^2$-stability~\eqref{eq:A:L2} of the operators yields
\begin{align*}
 \norm{h_{\ell+1}^{1/2}(\AA_{\ell+1}-\AA_\ell)\psi}{L^2(\Gamma)}
 \lesssim \norm{h_{\ell+1}^{1/2}\psi}{L^2(\Gamma)}.
\end{align*}
Plugging in $\psi = (u_\ell-u_{k_0})'$, the usual inverse estimate
from~\cite[Thm.~3.6]{ghs} shows
\begin{align*}
 &\norm{h_{\ell+1}^{1/2}(\AA_{\ell+1}\!-\!\AA_\ell)(u_\ell\!-\!u_{k_0})'}{L^2(\Gamma)}
 \lesssim\norm{h_{\ell+1}^{1/2}(u_\ell-u_{k_0})'}{L^2(\Gamma)}
 \\&\qquad
 \lesssim\norm{(u_\ell-u_{k_0})'}{\H^{-1/2}(\Gamma)}
 \lesssim\norm{u_\ell-u_{k_0}}{\H^{1/2}(\Gamma)},
\end{align*}
where the hidden constants depend only on $\Gamma$ and uniform boundedness
of the local mesh-ratio $\kappa(\TT_\ell)$.
For $\ell\ge\max\{k_0,\ell_0\}$, we thus obtain
\begin{align*}
 &\norm{h_{\ell+1}^{1/2}(\AA_{\ell+1}-\AA_\ell)u_\ell'}{L^2(\Gamma)}
 \\&\qquad
 \lesssim \norm{(\AA_{\ell+1}-\AA_\ell)u_{k_0}'}{L^2(\Gamma)}
 + \norm{u_\ell-u_{k_0}}{\H^{1/2}(\Gamma)}
 \\&\qquad
 \le 2\eps.
\end{align*}
This proves
\begin{align}\label{eq:hypsing:step3}
 &\norm{h_{\ell+1}^{1/2}(\AA_{\ell+1}-\AA_\ell)u_\ell'}{L^2(\Gamma)}
 \xrightarrow{\ell\to\infty}0.
\end{align}
\next 
Altogether,~\eqref{eq:hypsing:step1}--\eqref{eq:hypsing:step3} prove
\begin{align*}
 \eta_{\ell+1} \le (1-\theta/2)^{\revision{1/2}}\,\eta_\ell + \alpha_\ell
 \text{ with }0\le\alpha_\ell\xrightarrow{\ell\to\infty}0.
\end{align*}
Since $0<\theta\le1$, the error estimator is thus contractive up
to a zero sequence. Therefore, elementary calculus concludes~\eqref{eq:hypsing:estconv}.
\end{proof}

\subsection{Weakly-singular integral equation}
\label{section:weaksing:convergence}

\noindent
As for the hyper-singular integral equation, we have the following 
convergence result for the adaptive algorithm of Section~\ref{section:weaksing:algorithm}.

\begin{theorem}\label{prop:weaksing:convergence}
Let $(\phi_\ell)_{\ell\in\N}$ and $(\eta_\ell)_{\ell\in\N}$ be the sequences
of discrete solutions and error estimators generated by the adaptive
algorithm. Then, it holds
\begin{align}
 \lim_{\ell\to\infty}\eta_\ell = 0.
\end{align}
Provided that $\enorm{\phi-\phi_\ell}\lesssim \eta_\ell$, 
cf.~Theorem~\ref{prop:weaksing},   we may thus conclude
$\lim\limits_{\ell\to\infty}\phi_\ell = \phi$.
\end{theorem}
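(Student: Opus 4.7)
The plan is to mirror the proof of Theorem~\ref{prop:hypsing:convergence}, which establishes estimator reduction of the form
\begin{align*}
 \eta_{\ell+1}\le q\,\eta_\ell+\alpha_\ell
 \quad\text{with}\quad 0<q<1
 \text{ and }0\le\alpha_\ell\xrightarrow{\ell\to\infty}0,
\end{align*}
from which $\eta_\ell\to 0$ follows by elementary calculus. I would split via the triangle inequality
\begin{align*}
 \eta_{\ell+1}
 &\le \norm{h_{\ell+1}^{1/2}(1-\AA_\ell)\phi_\ell}{L^2(\Gamma)} \\
 &\quad + \norm{h_{\ell+1}^{1/2}(1-\AA_{\ell+1})(\phi_{\ell+1}-\phi_\ell)}{L^2(\Gamma)} \\
 &\quad + \norm{h_{\ell+1}^{1/2}(\AA_{\ell+1}-\AA_\ell)\phi_\ell}{L^2(\Gamma)}
\end{align*}
and treat the three summands in turn, in the same order as~\eqref{eq:hypsing:step1}--\eqref{eq:hypsing:step3}.

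\next The first summand is controlled by the bisection identity $h_{\ell+1}|_T=h_\ell|_T/2$ on refined elements combined with the D\"orfler criterion~\eqref{eq:doerfler} of step~(iii) of the adaptive algorithm, which gives the verbatim contraction bound $(1-\theta/2)^{1/2}\eta_\ell$. For the second summand, local $L^2$-stability~\eqref{eq:A:L2} of $\AA_{\ell+1}$ together with the inverse estimate of~\cite[Thm.~3.6]{ghs} for piecewise constants yields
\begin{align*}
 \norm{h_{\ell+1}^{1/2}(1-\AA_{\ell+1})(\phi_{\ell+1}-\phi_\ell)}{L^2(\Gamma)}
 \lesssim \norm{\phi_{\ell+1}-\phi_\ell}{\H^{-1/2}(\Gamma)}
 \simeq \enorm{\phi_{\ell+1}-\phi_\ell},
\end{align*}
which tends to zero by the a~priori convergence of Lemma~\ref{lemma:apriori} applied with $H=\H^{-1/2}(\Gamma)$ and $X_\ell=\PP^0(\TT_\ell)$.

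\next The third summand is the main obstacle and requires a cross-diagonal argument that simultaneously exploits convergence of $\phi_\ell$ and of $\AA_\ell$. For a fixed $\eps>0$, Lemma~\ref{lemma:apriori} provides $k_0\in\N$ with $\enorm{\phi_\ell-\phi_{k_0}}\le\eps$ for all $\ell\ge k_0$, while Proposition~\ref{lemma:scottzhang} applied to the modified operators from Section~\ref{section:symm} supplies an index $\ell_0$ such that $\norm{(\AA_{\ell+1}-\AA_\ell)\phi_{k_0}}{L^2(\Gamma)}\le\eps$ for $\ell\ge\ell_0$. Before invoking the proposition I would verify (using Poincar\'e's inequality on individual elements adjacent to non-smooth normal points) that the modified $\AA_\ell$ still satisfy the local stability and approximation properties~\eqref{eq:A:L2}--\eqref{eq:A:H1}, so that at least its pointwise $L^2$-limit part applies verbatim. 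Splitting $\phi_\ell=\phi_{k_0}+(\phi_\ell-\phi_{k_0})$, the static part is handled by the pointwise limit since $\norm{h_{\ell+1}}{L^\infty(\Gamma)}$ stays bounded by the initial mesh-size, while for the drifting part another application of $L^2$-stability~\eqref{eq:A:L2} and the inverse estimate reduces the contribution to $\enorm{\phi_\ell-\phi_{k_0}}\lesssim\eps$. Collecting all three bounds produces the desired estimator reduction with $q=(1-\theta/2)^{1/2}$ and $\alpha_\ell\to 0$, completing the proof.
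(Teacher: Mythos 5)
Your proposal is correct and follows essentially the same route as the paper, which itself only states that the proof ``follows analogously'' to Theorem~\ref{prop:hypsing:convergence}; you have carried out exactly that analogy with the right substitutions ($\phi_\ell$ in place of $u_\ell'$, the $\H^{-1/2}(\Gamma)$-setting, Lemma~\ref{lemma:apriori} with $X_\ell=\PP^0(\TT_\ell)$). Your extra care about the modified averaging operator is well placed: the paper's stability lemma already covers the operator from~\eqref{eq2:weaksing:clement}, and since only the $L^2$-pointwise-limit part of Proposition~\ref{lemma:scottzhang} is needed, the possible jumps of $\AA_\ell v$ at corners cause no difficulty.
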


\begin{proof}
The proof follows analogously to that of Theorem~\ref{prop:hypsing:convergence}.
\end{proof}

\bigskip

\noindent
{\bf Acknowledgement.}
The authors MF, TF, and DP acknowledge support through the Austrian Science Fund (FWF) under grant P21732 \emph{Adaptive Boundary Element Method}. 
\revision{MK acknowledges support by CONICYT project Anillo ACT1118 (ANANUM).}


\newcommand{\etalchar}[1]{$^{#1}$}

\end{document}